\font\msbm=msbm10
\def\vfld#1{\vec{\,#1}}
\def\P{\hbox{\msbm P}}
\def\Q{\hbox{\msbm Q}}
\newcommand{\RE}{\mathcal R}
\newcommand\0{\boldsymbol{0}}
\newcommand{\bfu}{\mathbf{u}} 
\newcommand{\bfp}{\mathbf{p}}
\newcommand{\bfq}{\mathbf{q}}
\newcommand{\bfr}{\mathbf{r}}
\newcommand{\bff}{\mathbf{f}} 
\newcommand{\bfg}{\mathbf{g}} 
\newcommand{\bfz}{\mathbf{z}}
\renewcommand{\P}{\mathbb{P}}
\newcommand{\R}{\mathbb{R}}
\renewcommand{\AA}{\mathcal{A}}
\newcommand{\MM}{\mathcal{M}}
\newcommand{\PP}{\mathcal{P}}
\newcommand{\TT}{\mathcal{T}}
\DeclareMathOperator*{\hull}{span}
\newcommand{\npressure}{n_p}
\newcommand{\nvelocity}{n_u}
\newcommand{\nCpressure}{n_{k}}
\newcommand{\nel}{n_{el}}
\newcommand{\bp}{\boldsymbol{p}}
\newcommand{\bq}{\boldsymbol{q}}
\newcommand{\br}{\boldsymbol{r}}
\newcommand{\bu}{\boldsymbol{u}}
\newcommand{\bv}{\boldsymbol{v}}
\newcommand{\bw}{\boldsymbol{w}}
\newcommand{\bz}{\boldsymbol{z}}
\newcommand{\qnull}{\boldsymbol{k}}
\newcommand{\anull}{\boldsymbol{w}}
\newcommand{\zrange}{\overline{\bz}}
\newcommand{\boldA}{\boldsymbol{A}}
\newcommand{\pressuremass}{M_Q} 
\newcommand{\Aapprox}{\boldsymbol{M}}
\newcommand{\Qapprox}{M_S}
\newcommand{\Vspace}{\boldsymbol{V}_h}
\newcommand{\Qspace}{Q_h}
\newcommand{\Qzerospace}{Q_{h}^0}
\newcommand{\VspaceTH}{\boldsymbol{V}_{h}^{TH}}
\newcommand{\QspaceTH}{Q_h^{TH}}
\newcommand{\VspaceETH}{\VspaceTH}
\newcommand{\QspaceETH}{Q_{h}^{\star}}
\DeclareMathOperator*{\diag}{diag}
\DeclareMathOperator*{\nullspace}{null}
\def\marginnote#1{\setbox0=\vtop{\hsize 5pc
    \rightskip=.5pc plus 1.5pc #1}\leavevmode
     \vadjust{\dimen0=\dp0
      \kern-\ht0\hbox{\kern\hsize\kern1pc\box0}\kern-\dimen0}}
\def\bookref{}
\def\backref{}
\pgfplotsset{
tick label style={font=\tiny},
legend style={font=\tiny},
xlabel style={yshift=+0.5ex},
ylabel style={yshift=-1.0ex}
}
\theoremstyle{plain}
\newtheorem{theorem}{Theorem}
\newtheorem{proposition}[theorem]{Proposition}
\theoremstyle{definition}
\newtheorem{algorithm}[theorem]{Algorithm}
\newtheorem{remark}[theorem]{Remark}
\newtheorem{examp}{Test problem}
\def\@seccntformat#1{%
  \protect\textup{\protect\@secnumfont
    \ifnum\pdfstrcmp{subsection}{#1}=0 \bfseries\fi
    \csname the#1\endcsname
    \protect\@secnumpunct
  }%
}
\definecolor{otherblue}{rgb}{0,0.3,0.6}
\title{Fast solution of incompressible flow problems with two-level pressure approximation}
\author{Jennifer Pestana}
\address{Department of Mathematics and Statistics, University of Strathclyde, Glasgow, G1 1XH, UK}
\email{jennifer.pestana@strath.ac.uk}
\author{David J. Silvester}
\address{Department of Mathematics, University of Manchester, Oxford Road, Manchester M13 9PL, UK}
\email{d.silvester@manchester.ac.uk}
\thanks{{\em Acknowledgements.}
{This work was supported by EPSRC grant EP/W033801/1.}
}
\date{\today}
\begin{document}

\begin{abstract}
This paper develops efficient preconditioned iterative solvers for 
incompressible flow problems discretised by an enriched Taylor--Hood mixed approximation, 
in which the usual pressure space is augmented by a piecewise constant pressure to ensure 
local mass conservation. 
This enrichment process causes over-specification of the pressure when the pressure 
space is defined by the union of standard Taylor--Hood basis functions and piecewise constant 
pressure basis functions, which  
complicates the design and implementation of efficient solvers for the resulting linear systems. 
We first describe the impact of this choice of pressure space specification on the matrices involved. 
Next, we show how 
to recover effective solvers for Stokes problems, with preconditioners based on the singular pressure 
mass matrix, and for Oseen systems arising from linearised Navier--Stokes equations, by using 
a two-stage pressure convection--diffusion strategy.
The codes used to generate the numerical results are available online.
\end{abstract}

\maketitle
\thispagestyle{fancy}

\section{Introduction} \label{sec:intro}

Reliable and efficient iterative solvers for  models of steady incompressible flow emerged in the  
early 1990s.  Strategies based on block preconditioning of the underlying matrix operators 
using (algebraic or geometric) multigrid components have proved to be the key to  realising  mesh 
independent convergence (and optimal complexity) without the need for tuning parameters, particularly
in the context of classical  mixed finite element approximation, see Elman 
et al.~\cite[chap.\,9]{elman14}.  \bookref 
The focus  of this contribution is on efficient solver strategies in cases 
where (an inf--sup)  stable Taylor--Hood mixed approximation is augmented by a piecewise constant 
pressure in order to guarantee local conservation of mass. The augmentation leads to 
over-specification of the pressure solution when the pressure space is defined in the 
natural way via the union of Taylor--Hood pressure basis functions and basis functions for the piecewise 
constant pressure space, requiring a redesign of the established solver technology. 

The idea of adding a piecewise constant pressure to the standard rectangular biquadratic velocity, bilinear
pressure  ($\Q_2$--$\Q_1$) approximation was originally suggested during discussion around a
 blackboard at a conference on finite elements in fluids held in Banff in 1980; 
 see Gresho et al.~\cite{gresho81}. The need for local mass conservation was motivated by competition 
 from finite volume methods (such as the MAC scheme) in  the design of effective strategies for   
 modelling buoyancy-driven flow  in the atmosphere. 

The extension of the augmentation idea to Taylor--Hood ($\P_2$--$\P_1$)  triangular approximation was 
proposed in a paper presented at a conference  in Reading  in 1982;  see Griffiths~\cite{griffiths82}.
A  proof of stability of the augmented $\P_2$--$\P_1^\ast$  
approximation on triangular meshes  was constructed by Thatcher \& Silvester~\cite{thatcher87} in 1987. 
An extended version of this manuscript included discussion of $\Q_2$--$\Q_1^\ast$ hexahedral elements~\cite{thatcher90}. 
A rigorous assessment of the augmentation strategy was undertaken by Boffi et al.\ 
 two decades  later~\cite{boffi12}.
The strategy of augmenting a continuous pressure approximation to give a  locally mass-conserving 
strategy can also be generalised  to higher-order $\Q_k$--$\Q_{k-1}$ and  $\P_k$--$\P_{k-1}$ Taylor--Hood 
approximations. Inf--sup stability is assured for $k\geq 2$ in two dimensions and  for
 $\P_k$--$\P_{k-1}$ with $k\geq 3$ in three  dimensions; see~\cite[p.\,506]{boffi13}.
 
A closely related, yet fundamentally different Stokes approximation strategy is 
 developed in~\cite{yi2022}. Their idea is to stabilise the lowest-order  $\P_1$--$\P_0$  approximation  
 space  defined on triangles or tetrahedra  by augmenting  the  continuous velocity approximation 
 by a piecewise constant approximation. The extended mixed approximation is stable but nonconforming. 
 A similar approach is taken in~\cite{CGRT18}, but an alternative weak form is used. 
 Note that the design of linear solvers for the resulting discrete equations is relatively straightforward. 
 A special feature of the  extended approximation is that it can be augmented by postprocessing 
 to give a pressure-robust approximation; see~\cite{hu24}.

More generally, such an extended Galerkin  (EG) approximation  can be viewed as an intermediate between  continuous Galerkin (CG) and discontinuous Galerkin (DG).  This interpretation
of piecewise constant augmentation was originally put forward by  Sun \& Liu~\cite{sun09} and is
motivated by the fact that it gives local flux conservation when modelling transport in porous 
media flow problems, but with fewer degrees of  freedom compared to  vanilla DG.
 
 The novel contribution in this work  lies in the linear algebra aspects of two-field
pressure approximation. The immediate  issue that needs to be dealt with is the fact 
that, when the pressure space is specified by the frame formed by adding
usual finite element basis functions for the underlying Taylor--Hood space to basis 
functions for the discontinuous 
pressure space, the mass matrix  that determines the {\it stability} of the resulting mixed
approximation is  {\it singular}. This aspect is noted in the context of EG approximation
in~\cite[Remark 4.1]{lee16} and is discussed in  section~\ref{sec:mass}. 
The main issue that has  to be dealt with in practical flow simulation is the over-specification 
and associated ill-conditioning of the discrete operators that arise in the 
preconditioning of the linearised Navier--Stokes operator. This is the focus of the 
discussion in section~\ref{sec:oseen}. The conclusion of this
study is that  {\it optimal} complexity convergence rates can be recovered
when using two-field pressure  approximation, but only after  
a careful redesign of  the preconditioning components.

\section{Two-field pressure mass matrix} \label{sec:mass}

In this section we consider the Stokes problem 
\begin{alignat*}{2}
-\triangle \vfld{u} + \nabla p &= \vfld{0} & \quad & \text{ in } \Omega,\\
\nabla\cdot \vfld{u} & = 0& \quad & \text{ in } \Omega,\\
\vfld{u} &= \vfld{g} & \quad & \text{ on } \partial \Omega,
\end{alignat*}
where $\vfld{u}$ and $p$ are the fluid velocity and pressure, 
respectively, and $\Omega \subset \R^d$, 
$d \in \{2,3\}$, is a polygonal or polyhedral domain.
 
Throughout this section, we assume that
 $\Vspace \subset H_0^1(\Omega)^d$ and 
$\Qspace \subset L^2_0(\Omega):=\{q \in L_2 : \int_\Omega q = 0\}$
are an inf--sup stable pair of finite element spaces. 
Then the corresponding finite element 
approximation problem is to 
find $(\vfld{u}_h, p_h) \in \Vspace \times \Qspace$ such that 
\begin{subequations}
\label{eq:stokes_fe}
\begin{alignat}{2}
a(\vfld{u}_h,\vfld{v}_h) + b(\vfld{v}_h,p_h) 
&= (\vfld{f}_h,\vfld{v}_h) & \quad & \text{ for all } \vfld{v}_h \in \Vspace,\label{eq:stokes_fe_momentum}\\
b(\vfld{u}_h,q_h) & =  (g_h,q_h)
& \quad & \text{ for all } q_h \in \Qspace,\label{eq:stokes_fe_mass}
\end{alignat}
\end{subequations}
where $(\cdot,\cdot)$ denotes the usual $L^2$ inner product, 
right-hand side functions $\vfld{f}_h$ and $g_h$ are  associated with the 
specified boundary velocity  ($g_h$ is zero for enclosed flow),
and 
\begin{align*}
a(\vfld{u},\vfld{v}) = \int_\Omega \nabla \vfld{u} : \nabla \vfld{v},\qquad
b(\vfld{u},p)  = -\int_\Omega p\,\nabla\cdot \vfld{u}.
 \end{align*}
 
Solving the finite element problem \eqref{eq:stokes_fe} 
is then equivalent to solving the  linear system 
\begin{equation}
\label{eq:saddle_point}
\underbrace{\begin{bmatrix}
\boldA & B^T\\
B & 0
\end{bmatrix}}_{\AA}
\begin{bmatrix}
\bu\\
\bp
\end{bmatrix} 
=
\begin{bmatrix}
\boldsymbol{f}\\
\boldsymbol{g}
\end{bmatrix},
\end{equation}
where $\boldA\in\R^{\nvelocity \times \nvelocity}$ is symmetric positive definite 
and $B\in\R^{\npressure\times \nvelocity}$ \cite[chap.\,3]{elman14}.  \bookref
The matrix $\AA$ is of well-known saddle point type, 
and solvers for this sort of system have 
been extensively studied, see, e.g., Benzi et al.~\cite{BGL05}.
Since $\AA$ is large and sparse, the system is typically solved by an iterative method, 
with preconditioned MINRES \cite{PaSa75} a popular choice. 

An ideal preconditioner for $\AA$ is \cite{Kuzn95,MGW00}
\[\PP_{\text{ideal}} = \begin{bmatrix}\boldA & \\ & B\boldA^{-1}B^T\end{bmatrix},\]
since the eigenvalues of the preconditioned matrix are $0$, $1$, and $(1\pm \sqrt{5})/2$, 
with the zero eigenvalue appearing only if the preconditioned matrix is singular. 
This preconditioner is usually too costly to apply, and 
so efficient block diagonal preconditioners for $\AA$ are typically
 based on spectrally equivalent preconditioners for $\boldA$ and 
the negative Schur complement $S = B\boldA^{-1}B^T$. 
For Stokes problems, the solve with $\boldA$ can be replaced by, 
e.g., an algebraic or geometric multigrid solver, 
while for stable discretisations $S$ is spectrally equivalent to $\pressuremass$, 
the pressure mass matrix \cite[chap.\,3]{elman14}, \bookref
i.e., there exist constants $\gamma$ and $\Gamma$, independent of $h$, such that 
\begin{equation}
\label{eq:spec_q}
\gamma^2 \le \frac{\bq^TB\boldA^{-1}B^T\bq}{\bq^T\pressuremass\bq} \le \Gamma^2
\end{equation}
for all vectors $\bq$ except those corresponding to the function $q_h \equiv 1$ on $\Omega$. 
 For certain element pairs, $\pressuremass$ itself is easily inverted, e.g., 
for discontinuous $\P_0$ or $\Q_0$ pressures, the mass matrix 
is diagonal. 
Otherwise, $\pressuremass$ can be 
replaced by its diagonal or by a fixed number of steps of Chebyshev semi-iteration acceleration 
of a Jacobi iteration 
when, as is common, $\diag(\pressuremass)^{-1}\pressuremass$ has 
eigenvalues within a small interval 
and this interval lies away from the origin \cite{WaRe09,Wath87}.

To summarise, for inf--sup stable finite element pairs, 
an effective preconditioner for \eqref{eq:saddle_point} is 
\begin{equation}
\label{eq:stokes_pre}
\PP = \begin{bmatrix} \Aapprox & \\ & \Qapprox \end{bmatrix},
\end{equation}
where $\Aapprox \in \R^{\nvelocity \times \nvelocity}$ is $\boldA$ or an approximation, 
and $\Qapprox \in \R^{\npressure\times\npressure}$ 
is $\pressuremass$ or an approximation. 
(See, e.g., Elman et al.~\cite[chap.\,4]{elman14} \bookref 
for results with  $\Q_2$--$\Q_1$ approximation on quadrilaterals.)

In this section, our aim is to determine effective preconditioners for the enriched Taylor--Hood element.
As we will see, although enriching the pressure space results in better mass conservation properties, 
it can also present challenges for solving \eqref{eq:saddle_point}. 

\subsection{Augmented Taylor--Hood Elements}
We see from \eqref{eq:stokes_fe} that the mass conservation condition
 $\nabla\cdot \vfld{u} = 0$ is imposed only in a weak sense, 
 and if Taylor--Hood elements are employed 
 we can only guarantee that \eqref{eq:stokes_fe_mass} will hold.  
 However, by augmenting the pressure space by piecewise constant pressures 
 it is possible to obtain local mass conservation, 
 so that  the average of the divergence is zero on each individual element. 
The natural choice is to define this space by a frame 
 consisting of the union 
 of the continuous Taylor--Hood pressure basis functions and the discontinuous 
 pressure basis functions.  In this case, as we will discuss later in this section,
constant pressures have multiple 
 representations, 
 and this results in certain challenges for solving \eqref{eq:saddle_point}. 
Although it is certainly possible to instead compute a basis for this augmented 
 pressure space, we show that the linear algebra challenges associated with 
 using a frame can be overcome, which simplifies the implementation of the method.

Let us now describe the enriched Taylor--Hood finite element spaces. 
We first introduce a shape-regular family of simplicial, 
quadrilateral (in 2D) or hexahedral (in 3D) decompositions of the domain $\Omega$. 
We assume that any two elements have at most a common face, 
edge, or vertex and denote  by $h$ the maximum diameter of any element. 
The total number of elements in the resulting mesh is $\nel$. 

We denote the usual Taylor--Hood finite element space by 
$\VspaceTH \times \QspaceTH$, so that $\VspaceTH\times \QspaceTH = (\Q_{k+1})^d \times \Q_{k}$ or 
 $\VspaceTH\times \QspaceTH = (\P_{k+1})^d \times \P_{k}$ with $d\geq 2$. 
In the latter case, we additionally assume that the polynomial degree, $k$, satisfies $k \ge d-1$. 
The corresponding enriched Taylor--Hood space is 
$\VspaceETH \times \QspaceETH$ where 
\begin{align}
\QspaceETH &= \{q = q_k + q_0, q_k \in \QspaceTH, q_0 \in \Qzerospace\}\label{eq:eth_pressure_space}
\end{align}
and 
$\Qzerospace$ is the space of discontinuous pressures that are constant on each element.  
Thus, we see that the velocity approximation space is identical to that of the corresponding 
Taylor--Hood element, 
while $\QspaceETH$ is $\QspaceTH$ augmented with piecewise constant pressures. 
It follows that functions in $\QspaceETH$ may be discontinuous across inter-element boundaries. 

We stress that the enriched Taylor--Hood space is well defined, and inf--sup stable.
From a linear algebra perspective, however, a critical point is the representation of $\QspaceETH$. 
Ideally, we would like to represent functions in $\QspaceETH$ as linear combinations of basis functions.
In this case the resulting linear system would be nonsingular, and it is likely that the 
approaches described at the stat of this section could be applied directly.
However, the most natural choice is define $\QspaceETH$ by a frame, i.e., to let 
\begin{equation}
\label{eq:eth_spec}
\QspaceETH = \hull\{\phi_1,\dotsc, \phi_{\nCpressure},\phi_{\nCpressure+1},\dotsc,\phi_{\npressure}\},
\end{equation}
where $\{\phi_k\}_{k = 1}^{\nCpressure}$ and $\{\phi_k\}_{k = \nCpressure+1}^{\npressure}$
are Lagrange bases of $\QspaceTH$ and $\Qzerospace$ (see \eqref{eq:eth_pressure_space}). 
This approach to specifying the pressure space has been used in, e.g., \cite{boffi12, QiZh05,thatcher90}.

Although this makes specification of the pressure straightforward, with this choice
any constant function on $\Omega$ 
can be  represented by a function in $\QspaceTH$ \emph{or} a function in $\Qzerospace$. 
This has profound consequences for the linear algebra: the
 pressure mass matrix $\pressuremass$ that determines $\Qapprox$ in \eqref{eq:stokes_pre} becomes singular, 
and the rank of the matrix $B^T$ is reduced. 
In the rest of this section we first establish these properties, before showing that 
it is still possible to solve~\eqref{eq:saddle_point} by preconditioned MINRES in this case.

Using \eqref{eq:eth_spec}, we can relate vectors $\bp \in \R^{\npressure}$ 
and functions $p=p_k+p_0\in \QspaceETH$, where $p_k \in \QspaceTH$, $p_0 \in \Qzerospace$. 
Specifically, 
\begin{equation}
\label{eq:function_basis}
p = \sum_{i = 1}^{\npressure} \bp_i \phi_i, \quad 
p_k = \sum_{i = 1}^{\nCpressure} \bp_i \phi_i \quad \text{ and }\quad  
p_0 = \sum_{i = \nCpressure+1}^{\npressure} \bp_i \phi_i.
\end{equation}

As mentioned above, any constant function has multiple representations when specified via \eqref{eq:eth_spec}. 
In particular, $p = p_k + p_0 \equiv 0$ with  $p_k = \alpha$ and $p_0 = -\alpha$, for any $\alpha \in \R$.
 We see from \eqref{eq:function_basis} that this representation of the zero function corresponds to the 
 vector $\alpha \qnull$, where 
\begin{equation}
\label{eq:pressure_null_vector}
\qnull = 
\begin{bmatrix}
\boldsymbol{1}_{n_k}\\
-\boldsymbol{1}_{n_0}
\end{bmatrix}
\end{equation}
and $n_0 = n_p-n_k$. 
A direct consequence of the correspondence between $\qnull$ and the zero function on $\Omega$
 is that $\pressuremass\qnull = \boldsymbol{0}$ and $B^T\qnull = \boldsymbol{0}$, 
 as we now show. 

\begin{proposition}
\label{prop:Qnull}
Let 
\[\pressuremass = [q_{ij}]_{i,j = 1}^{\npressure}, \quad q_{ij} = \int_\Omega \phi_i \phi_j,\] 
be the pressure mass matrix for the enriched Taylor--Hood pressure space 
$\QspaceETH$ in \eqref{eq:eth_pressure_space}, specified by the frame \eqref{eq:eth_spec}. 
Then, $\nullspace(\pressuremass) = \hull\{\qnull\}$, where $\qnull$ is given in \eqref{eq:pressure_null_vector}. 
\end{proposition}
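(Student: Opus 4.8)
The plan is to identify the quadratic form associated with $\pressuremass$ with the squared $L^2(\Omega)$-norm of the corresponding pressure function, thereby reducing the claim to a statement about which coefficient vectors represent the zero function on $\Omega$. Concretely, for any $\bp\in\R^{\npressure}$ with associated function $p=\sum_{i=1}^{\npressure}\bp_i\phi_i$ as in \eqref{eq:function_basis}, one has
\[
\bp^T\pressuremass\bp=\sum_{i,j=1}^{\npressure}\bp_i\bp_j\int_\Omega\phi_i\phi_j=\int_\Omega p^2=\|p\|_{L^2(\Omega)}^2\ge 0,
\]
so $\pressuremass$ is symmetric positive semidefinite. Hence $\bp\in\nullspace(\pressuremass)$ if and only if $\bp^T\pressuremass\bp=0$, i.e.\ if and only if $p\equiv 0$ in $L^2(\Omega)$, and it remains to show that $p\equiv 0$ precisely when $\bp\in\hull\{\qnull\}$.

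For the inclusion $\hull\{\qnull\}\subseteq\nullspace(\pressuremass)$, note that by the reasoning in the paragraph preceding the proposition the vector $\alpha\qnull$ corresponds via \eqref{eq:function_basis} to $p_k=\alpha$, $p_0=-\alpha$, whose sum is the zero function; thus $(\alpha\qnull)^T\pressuremass(\alpha\qnull)=0$, and semidefiniteness gives $\pressuremass(\alpha\qnull)=\boldsymbol{0}$. For the converse, suppose $p=p_k+p_0\equiv 0$ with $p_k\in\QspaceTH$, $p_0\in\Qzerospace$, so that $p_k=-p_0$. On each element $p_k$ is a polynomial that agrees with a constant, and since a polynomial vanishing on a nonempty open set is identically zero, $p_k$ is constant on each element. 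Continuity of $p_k$ across inter-element faces, edges, and vertices, together with connectedness of the mesh, then forces $p_k$ to equal a single constant $c$ throughout $\Omega$, whence $p_0\equiv -c$. Because $\{\phi_i\}_{i=1}^{\nCpressure}$ is a Lagrange basis of $\QspaceTH$ (so $\sum_{i=1}^{\nCpressure}\phi_i\equiv 1$ and representations in this basis are unique), the coefficient vector of $p_k$ must be $c\boldsymbol{1}_{\nCpressure}$; likewise the element-indicator Lagrange basis of $\Qzerospace$ represents $-c$ by $-c\boldsymbol{1}_{n_0}$. Hence $\bp=c\qnull\in\hull\{\qnull\}$, which completes the argument.

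The only step that is not routine is the passage from the identity $p_k=-p_0$, with $p_0$ elementwise constant, to the conclusion that $p_k$ is globally constant. It rests on two ingredients that should be spelled out: first, a nonzero polynomial cannot vanish on a nonempty open subset of an element, so $p_k$ is constant on each element; second, the continuity of $p_k\in C^0(\overline{\Omega})$ combined with the standing assumption that any two elements meet in at most a common face, edge, or vertex propagates this constant across the whole connected mesh. Once $p_k$ is known to be constant, the conversion into the explicit identity $\bp=c\qnull$ is immediate from the partition-of-unity property and the uniqueness of representations in the Lagrange bases of $\QspaceTH$ and $\Qzerospace$.
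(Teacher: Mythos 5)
Your proof is correct and takes essentially the same route as the paper: both identify $\bp^T\pressuremass\bp$ with $\int_\Omega p^2$ to reduce $\nullspace(\pressuremass)$ to the coefficient vectors representing the zero function, and both then play the continuity of $p_k$ off against the piecewise constancy of $p_0$ to conclude that each must be a (global) constant, giving $\bp\in\hull\{\qnull\}$. The only difference is that you make explicit some details the paper leaves implicit (elementwise constancy via the vanishing-polynomial argument, propagation of the constant through the connected mesh, and the partition-of-unity/unique-representation step converting $p_k\equiv c$, $p_0\equiv -c$ into $\bp=c\qnull$), which is a faithful elaboration rather than a different argument.
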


\begin{proof}
Let $\bp \in \R^{\npressure}$, $\bp\ne \boldsymbol{0}$. Then, using \eqref{eq:function_basis}, we find that  
\[
\pressuremass\bp = 0 \quad 
 \Leftrightarrow \quad \bp^T \pressuremass \bp=0 \quad
 \Leftrightarrow \quad  \int_\Omega\left(\sum_{i = 1}^{\npressure} \bp_i \phi_i\right) 
\left( \sum_{j = 1}^{\npressure}\bp_j \phi_j\right)  = 0
 \Leftrightarrow \quad  \int_\Omega p^2  = 0,
\]
which implies that $p \equiv 0$ in $\Omega$
since $p$ is continuous on each element. 
Since $p \equiv 0$ corresponds to vectors of the form $\alpha \qnull$,
we find that $\hull\{\qnull\} \subseteq \nullspace(\pressuremass)$. 

We now show that there are no other vectors in the nullspace. 
Since $p = p_k + p_0\equiv 0$, $p_k \in \QspaceTH$, $p_0 \in \Qzerospace$, 
we must have $p_k = -p_0$ everywhere in $\Omega$. But $p_0$ is piecewise constant, 
and $p_k$ is continuous on $\overline{\Omega}$. It follows that $p_0\equiv \alpha$ and 
$p_k \equiv -\alpha$ on $\Omega$ for some constant $\alpha\in\R$.
Such functions correspond to vectors of the form $\alpha \qnull$, 
which shows that  $\nullspace(\pressuremass) = \hull\{\qnull\}$, as required.
\end{proof}

\begin{remark}
\label{rem:Bnull}
A very similar argument shows that $\qnull \in \nullspace(B^T)$. 
\end{remark}

What do 
these results mean for the solution of \eqref{eq:saddle_point}
 by preconditioned MINRES, when the coefficient matrix is constructed using the frame in \eqref{eq:eth_spec}? 
 First, 
if $B^T\qnull = \boldsymbol{0}$  then it
 follows that $\AA$ is always singular, with $\AA\anull = \boldsymbol{0}$, 
 where 
\begin{equation}
\label{eq:anull}
\anull = \begin{bmatrix} \boldsymbol{0}_{\nvelocity}\\ \qnull \end{bmatrix}.
\end{equation}
If the linear system \eqref{eq:saddle_point} is consistent then this 
does not pose a problem for preconditioned MINRES.
However, as we shall now see, the proposed block diagonal preconditioner  \eqref{eq:stokes_pre}, 
with $\Qapprox = \pressuremass$,  
is also singular, and so effective preconditioning requires some care.

\begin{algorithm}{Preconditioned MINRES algorithm for solving $\AA \boldsymbol{x} = \boldsymbol{b}$ 
with symmetric positive definite preconditioner $\PP$ \cite[Algorithm 4.1]{elman14}.  \bookref}
\label{alg:minres}
\begin{algorithmic}[1]
\State $\bv^{(0)} = \boldsymbol{0}$, $\bw^{(0)} =  \boldsymbol{0}$, $\bw^{(1)} =  \boldsymbol{0}$, $\gamma_0 = 0$
\State Choose $\boldsymbol{x}^{(0)}$, compute $\bv^{(1)} = \boldsymbol{b} - \AA\boldsymbol{x}^{(0)}$
\State Solve $\PP \bz^{(1)} = \bv^{(1)}$, set $\gamma_1 = \sqrt{\langle \bz^{(1)}, \bv^{(1)}\rangle}$
\State Set $\eta = \gamma_1$, $s_0 = s_1 = 0$, $c_0 = c_1 = 1$
\For {$j=1$ until convergence}
\State $\bz^{(j)} = \bz^{(j)}/\gamma_j$
\State $\delta_j = \langle A\bz^{(j)}, \bz^{(j)}\rangle$
\State $\bv^{(j+1)} = \AA\bz^{(j)} - (\delta_j/\gamma_j)\bv^{(j)}-(\gamma_j/\gamma_{j-1})\bv^{(j-1)}$
\State Solve $\PP \bz^{(j+1)} = \bv^{(j+1)}$
\State $\gamma_{j+1} = \sqrt{\langle \bz^{(j+1)}, \bv^{(j+1)}\rangle}$
\State $\alpha_0 = c_j\delta_j - c_{j-1}s_j\gamma_j$
\State $\alpha_1 = \sqrt{\alpha_0^2 + \gamma_{j+1}^2}$
\State $\alpha_2  = s_j\delta_j + c_{j-1}c_j\gamma_j$
\State $\alpha_3 = s_{j-1}\gamma_j$
\State $c_{j+1} = \alpha_0/\alpha_1$; $s_{j+1} = \gamma_{j+1}/\alpha_1$
\State $\bw^{(j+1)} = (\bz^{(j)} - \alpha_3\bw^{(j-1)} - \alpha_2\bw^{(j)})/\alpha_1$
\State $\boldsymbol{x}^{(j)} = \boldsymbol{x}^{(j-1)} + c_{j+1}\eta \bw^{(j+1)}$
\State $\eta = -s_{j+1}\eta$
\State <Test for convergence>
\EndFor
\end{algorithmic}
\end{algorithm}

\subsection{Preconditioning considerations}
\label{sec:singular_preconditioner}
Knowing that the enriched Taylor--Hood element is inf--sup stable
implies that the preconditioner \eqref{eq:stokes_pre} will be effective when solving \eqref{eq:saddle_point}. 
However, with the common choice of frame \eqref{eq:eth_spec}, the matrix $\pressuremass$, and hence the preconditioner $\PP$, are singular, 
since $\PP\anull = \boldsymbol{0}$, where $\anull$ is given in \eqref{eq:anull}. 
(Note that this implies that  $\AA$ and $\PP$ have a common nullspace.) 
Here, we will show that using a singular preconditioner causes no difficulty for the preconditioned MINRES 
method in Algorithm~\ref{alg:minres} in exact arithmetic. At the end of section~\ref{sec:mass}, 
we present numerical results with two different preconditioners to demonstrate our approach.

We start by noting that at each  iteration step we need to 
solve a linear system of the form $\PP\bz^{(j)} = \bv^{(j)}$. 
If this system is consistent there are infinitely many solutions, 
which take the form $\bz^{(j)} = \zrange^{(j)} + \zeta_j \anull$, 
with $\zrange^{(j)}\perp \anull$ and $\zeta_j \in \R$. 
Our next step is to examine the effect of $|\zeta_j|$ on the scalars and vectors in 
Algorithm~\ref{alg:minres}. 
Observe that if the linear system $\AA \boldsymbol{x} = \boldsymbol{b}$ is 
consistent then $\boldsymbol{b}\perp \anull$
and so $\bv^{(1)} \perp \anull$. 
It then follows by induction that 
$\bv^{(j)}\perp \anull$, $j = 1,2,\dotsc$, and that the systems $\PP\bz^{(j)} = \bv^{(j)}$
are all consistent. 
Furthermore, 
$\gamma_j = \langle \bz^{(j)},\bv^{(j)}\rangle^{\frac{1}{2}} = \langle \zrange^{(j)},\bv^{(j)}\rangle^{\frac{1}{2}}$ because 
of the orthogonality of $\bv^{(j)}$ and $\anull$, so that $\zeta_j$ does not affect $\gamma_j$. 
Similarly, $\delta_j = \langle \AA \bz^{(j)},\bz^{(j)}\rangle =  \langle \AA \zrange^{(j)},\zrange^{(j)}\rangle$, 
which shows that $\delta_j$ is similarly unaffected by $\zeta_j$. 
Indeed, the only quantities that are affected by the nullspace components $\zeta_j \anull$, $j = 1,2,\dotsc$,  
are the vectors $\boldsymbol{w}^{(j)}$ and  $\boldsymbol{x}^{(j)}$.
In exact arithmetic, this is not a problem, since solutions of 
 $\AA \boldsymbol{x} = \boldsymbol{b}$ may certainly contain a component in the direction of $\anull$. 
However, in unlucky cases, the size of the nullspace component of $\boldsymbol{x}^{(j)}$ 
may be so large as to dominate the approximate solution. 
Alternatively, in finite precision $\bv^{(j)}$ and $\bw$ may not be exactly orthogonal. 
Hence, it may be wise to explicitly ensure that $\bz \perp \anull$. 
One option is to orthogonalise $\bz$ against $\anull$ after each preconditioner solve, 
but if $|\zeta_j|$ is large then the result may be inaccurate. 
A more robust approach is to note that solutions of $\PP\bz = \br$ are minimisers of the quadratic form 
\[\frac{1}{2}\bz^T\PP\bz - \bz^T\br,\]
since $\PP$ is positive semidefinite. 
Constraining $\bz$ to be orthogonal to $\anull$ is then equivalent to the following optimisation problem: 
\[\min_{\bz} \frac{1}{2}\bz^T\PP\bz - \bz^T\br \quad \text{s.t.} \quad \anull^T \bz = 0.\]
Applying a Lagrange multiplier approach results in the augmented system 
\[\begin{bmatrix} \PP & \anull\\ \anull^T & 0 \end{bmatrix} 
\begin{bmatrix} \bz\\\lambda\end{bmatrix} 
= 
\begin{bmatrix}\br\\0\end{bmatrix},\]
where $\lambda$ is the Lagrange multiplier, 
and solving this augmented system gives a vector $\bz$ that is orthogonal to $\anull$. 
Moreover, since 
\[
\begin{bmatrix} \PP & \anull\\ \anull^T & 0 \end{bmatrix}
 = 
 \begin{bmatrix} \Aapprox & 0& 0\\0 & \Qapprox & \qnull\\ 0& \qnull^T &0 \end{bmatrix}
 \]
we see that only the solve with $\Qapprox$ needs to be modified. 

\subsection{Approximating the two-level pressure mass matrix}
Now let us consider approximations of the matrix $\pressuremass$ which, 
because of the two-level pressure approximation, has $2\times 2$ block structure: 
\[\pressuremass = \begin{bmatrix} Q_{k} & R^T\\ R & Q_{0}\end{bmatrix},\]
where 
\begin{alignat*}{2}
Q_k &= [q_{k,ij}], \, i,j = 1,\dotsc, \nCpressure, &\qquad  q_{k,ij}& = \int_\Omega \phi_j \, \phi_i,\\
 R &= [r_{ij}], \, i = \nCpressure + 1,\dotsc, \npressure, \, j = 1,\dotsc, \nCpressure, & \qquad  r_{ij} &= \int_\Omega \phi_j \, \phi_i,\\
Q_0 &= [q_{0,ij}], \, i,j = \nCpressure + 1,\dotsc, \npressure, &\qquad  q_{0,ij}& = \int_\Omega \phi_j \, \phi_i.
\end{alignat*}
Note that $Q_k$ is the standard Taylor--Hood pressure mass matrix, 
and $Q_0$ is the standard discontinuous pressure mass matrix, 
while $R$ represents cross terms between the spaces 
$\QspaceTH$ and $\Qzerospace$ (see \eqref{eq:eth_pressure_space}).

Naively, one may wish to approximate $Q_k$ by its diagonal, but this results in slow convergence rates; 
it can be shown, using the method described by Wathen \cite{Wath87}, that $\diag(Q_k)^{-1}Q_k$ has 
eigenvalues in an interval $[0,\mu]$ where, for example, for $\P_1$ elements on triangles $\mu=3$. 
Accordingly, applying a fixed number of iterations of a Chebyshev semi-iteration based on a Jacobi iteration, as is standard 
for nonsingular Galerkin mass matrices \cite{WaRe09}, leads to large iteration counts here. 
Replacing $Q_k$ or $Q_0$ in $\pressuremass$ by their diagonals results in similarly poor performance.

However, it is possible to replace $\pressuremass$ by an approximation designed for symmetric 
positive semidefinite matrices, see, e.g., \cite{Cao91,Dax90,LNT16} and the 
references therein. 
For example, symmetric Gauss--Seidel is convergent for such matrices~\cite{Dax90}. 
Here, we have applied a fixed number of iterations of Chebyshev semi-iteration based on symmetric Gauss--Seidel~\cite{Cao91}.
The results are still mesh-dependent, because the largest non-unit eigenvalue of the underlying symmetric Gauss--Seidel 
iteration matrix approaches 1 as the mesh is refined but, for large problems, the cost per iteration is much lower than applying $\pressuremass$ 
exactly. 

\subsection{Reliable computation of the discrete inf--sup constant}
The inf--sup constant depends on the shape of the domain $\Omega$. 
Thus the constant for a step domain with a long channel is much smaller
than the constant for a square domain. Estimation of the inf--sup
constant dynamically provides useful information regarding the
connection between the velocity error and the pressure error. 
The inf--sup constant  also features in the norm equivalence between
the residual norm of the saddle-point system and the natural
``energy'' norm
$\| \nabla u \|_{L^2(\Omega)} + \| p \|_{L^2_0(\Omega)}, $
as discussed in the motivating paper \cite{SiSi11}.

A typical strategy to estimate the inf--sup constant for \eqref{eq:stokes_fe}
is to find the largest $\gamma$ that satisfies \eqref{eq:spec_q}, 
i.e., to find the smallest nonzero eigenvalue of the generalised eigenvalue problem 
\begin{equation}
\label{eq:inf-sup_eig}
B\boldA^{-1}B^T \bv = \lambda \pressuremass\bv,
\end{equation}
where $\boldA$ is prescribed by the velocity basis functions, $\pressuremass$ by the pressure functions, 
 and $B$ by both the velocity and pressure functions. 
 
Although the inf--sup constant is independent of the choice of these functions, in practice, its 
computation may be affected.  
For example, with the common choice to define $\QspaceETH$ by the frame \eqref{eq:eth_spec}, 
Proposition \ref{prop:Qnull} and Remark \ref{rem:Bnull}
show that $\qnull$ lies in the nullspaces of both $B^T$ and $\pressuremass$, 
which means that this generalised eigenvalue problem is singular, 
i.e., \emph{any} $\lambda\in\mathbb{R}$ satisfies \eqref{eq:inf-sup_eig} when $\bv = \qnull$. 
It is known that generalised eigenvalue problems with singular pencils are challenging to solve
numerically~\cite{HMP19}, 
and additional checks must be performed to ensure that an estimate of $\gamma$
 is not associated with the eigenvector $\qnull$. 
In practice we find that standard methods for computing eigenvalues of sparse 
 matrices may struggle to accurately compute these eigenvalues, 
 precisely because $B\boldA^{-1}B^T$ and $\pressuremass$ are singular. 

An alternative is to define $\QspaceETH$ by a basis instead of the commonly-used frame 
 \eqref{eq:eth_spec}, but it turns out that this is not necessary. 
The EST-MINRES approach proposed in \cite{SiSi11} 
is much more robust for this singular eigenvalue problem and gives consistently reliable results 
for certain preconditioners. 
The intuition is that by ensuring that any solves with $\pressuremass$ 
are orthogonal to $\qnull$ within the preconditioned MINRES method, 
(see Section~\ref{sec:singular_preconditioner}), we 
instead solve \eqref{eq:inf-sup_eig} for $\bv \perp \qnull$.  

We illustrate the EST-MINRES inf--sup constant estimates using two representative test problems 
that we describe below. 
All numerical results were obtained using T-IFISS~\cite{BRS21} (for triangular elements) and 
IFISS3D~\cite{PPS23} (for cubic elements). 
The stopping criterion for preconditioned MINRES 
is a reduction of the norm of the preconditioned 
residual by eight orders of magnitude,
 i.e., $\|\boldsymbol{r}_k\|_{\PP^{-1}}/\|\boldsymbol{r}_0\|_{\PP^{-1}} < 10^{-8}$. 
We apply two different block diagonal preconditioners \eqref{eq:stokes_pre}. The first is 
\[\mathcal{P}_1 = \begin{bmatrix} \boldA & \\ & \pressuremass\end{bmatrix}.\]
Although this preconditioner is expensive to apply, because it involves exact solves with 
$\boldA$, we have used it here to more clearly illustrate the key findings of this section, namely, 
that \eqref{eq:stokes_pre} with $\Qapprox =\pressuremass$ is an effective preconditioner for 
augmented Taylor--Hood problems, despite the singularity of $\pressuremass$, 
and that EST-MINRES provides reliable approximations to 
$\gamma$, the discrete inf--sup constant in \eqref{eq:spec_q}.
However, we note that it is possible to replace $\boldA$ by, e.g., an algebraic multigrid method 
such as the HSL code MI20 \cite{hsl}. With this AMG approximation, 
we are able to solve a 3D problem with nearly $10^6$ 
degrees of freedom in under 10 seconds 
on a quad-core, 62 GB RAM, Intel
i7-6700 CPU with 3.20GHz. Moreover, the effect on the inf--sup constant 
is fairly small.

Our second, cheaper, preconditioner is 
\[\mathcal{P}_2 = \begin{bmatrix} \boldA_{\text{AMG}} & \\ & M_{\text{cheb}}\end{bmatrix},\]
where $\boldA_{\text{AMG}}$ is one AMG V-cycle, with the default parameters in T-IFISS and 
IFISS3D and  $M_{\text{cheb}}$ is 20 iterations of 
a Chebyshev semi-iteration method based on symmetric Gauss--Seidel.

\begin{examp}[two-dimensional enclosed flow]\label{ex:2d_cavity}
Our first example is a classical driven-cavity flow in the square domain  $D=[-1,1]^2$. 
A Dirichlet no-flow condition is imposed on the 
bottom and side boundaries, while on the lid the nonzero tangential velocity is 
$u_y= 1- x^4$. 
The domain is subdivided uniformly into $n^2$ bisected squares. 
We use the standard $\P_2$--$\P_1$ Taylor--Hood 
mixed approximation and the augmented $\P_2$--$\P_1^\ast$ approximation. 
The two components of the pressure solution for the $\P_2$--$\P_1^\ast$ approximation, 
computed for $n=32$, are illustrated
in Fig.~\ref{fig:stokes_testproblem_2d}.   The centroid pressure field is  concentrated in the 
two corners where the pressure is singular, and the centroid pressures are  an  order of magnitude
smaller than  the vertex pressure in all elements. 
Consequently, the overall pressure field is visually identical to
 the $\P_1$ pressure field shown in the left plot in Fig.~\ref{fig:stokes_testproblem_2d}.
 \end{examp}
 
 \begin{figure}
       \begin{center}
	\includegraphics[width=0.7\linewidth,clip=true,trim = 2cm 4cm 1cm 2cm]{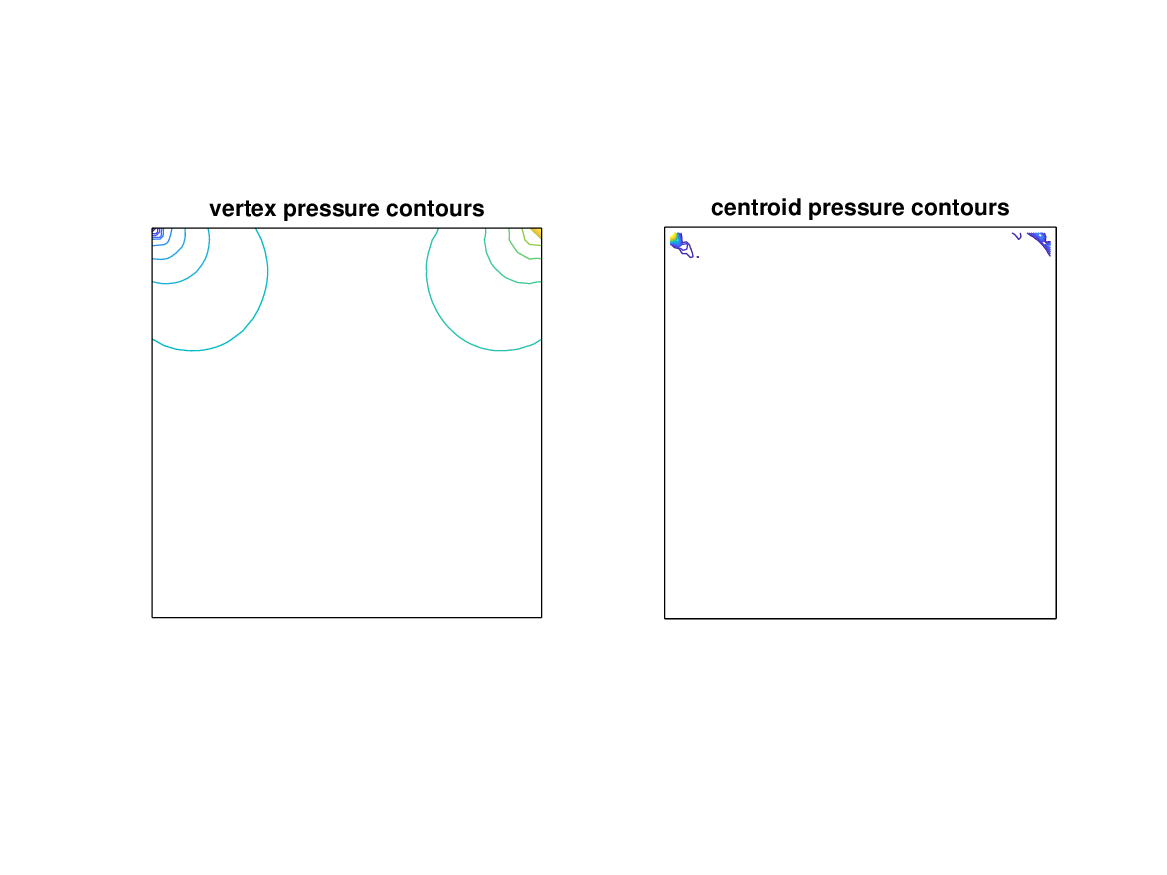}		 
	\end{center}
\caption{Representative $\P_2$--$\P_1^*$ pressure field solution for the cavity flow in 
Example \ref{ex:2d_cavity} computed on a uniform  mesh with 2048 right-angled  triangles and 1089 vertices.
Contours are equally spaced between the maximum
and minimum values in both plots.}
\label{fig:stokes_testproblem_2d}
\end{figure}

 \begin{examp}[three-dimensional enclosed flow]\label{ex:3d_cavity}
 Our second problem is a three-dimensional version of driven-cavity flow. 
 The domain is now $D = [-1,1]^3$. 
As in the previous example, the flow is enclosed, but now 
the nonzero tangential velocity 
$u_y= (1- x^4)(1-z^4)$ is specified on the top of the cavity.
The domain is subdivided uniformly into $n^3$ cubic elements, and 
we use $\Q_2$--$\Q_1$ and $\Q_2$--$\Q_1^\ast$ approximations. 
 \end{examp}

Table \ref{tab:2d_infsup} shows preconditioned MINRES iteration counts and EST-MINRES discrete inf--sup constant 
approximations for Example \ref{ex:2d_cavity} with preconditioner $\mathcal{P}_1$.
 We first note that for both the $\P_2$--$\P_1$ and $\P_2$--$\P_1^\ast$
approximations the iteration counts are quite similar and are mesh independent. 
In both cases the inf--sup constant approximations also appear to be converging from above. 
The approximation for $\P_2$--$\P_1^\ast$ elements appears to rapidly converge to four digits, 
indicating that even a relatively coarse grid is sufficient to obtain an approximation to the 
discrete inf--sup constant. 
However, the approximation for $\P_2$--$\P_1$ elements 
appears to converge more slowly. 
We also note that the two approaches give different inf--sup constant 
estimates, at least for the grids shown here.  
This is not so surprising as the matrices in \eqref{eq:spec_q} 
depend on the choice of finite element spaces. 

When we replace the preconditioner by the cheaper $\mathcal{P}_2$, 
the situation is largely unchanged 
for $\P_2$--$\P_1$ elements (see Table~\ref{tab:2d_infsup_sgscheb}). 
However, for $\P_2$--$\P_1^\ast$ elements, the preconditioned MINRES convergence rate 
and inf--sup constant estimate degrade as the mesh is refined. The effect on the convergence rate is 
caused by the largest non-unit eigenvalue of the symmetric Gauss--Seidel iteration matrix approaching 1 as the 
mesh is refined; this appears to also impact inf--sup constant approximation.

Fig.~\ref{fig:2d_infsup} plots the inf--sup approximations at each iteration of preconditioned MINRES 
for Example \ref{ex:2d_cavity} for preconditioner $\mathcal{P}_1$.
We see that a good approximation of the inf--sup constant is obtained 
after 20--25 iterations. It is again clear that for the enriched Taylor--Hood 
approximations we obtain very similar approximations for all grids. 

\begin{table}
\begin{tabular}{r| r r r r | r r r r}
\hline
 & \multicolumn{4}{c|}{$\P_2$--$\P_1$} & \multicolumn{4}{c}{$\P_2$--$\P_1^\ast$}\\
Grid& Velocity dof & Pressure dof & Iters & $\gamma^2$ & Velocity dof & Pressure dof & Iters & $\gamma^2$\\
\hline
4 & 2178 & 289 & 37 & 0.1947 & 2178 & 801 & 42 & 0.1397\\
5 & 8450 & 1089 & 37 & 0.1926 & 8450 & 3137 & 42 & 0.1396\\
6 & 33282 & 4225 & 39 & 0.1911 & 33282 & 12417 & 40 & 0.1395\\
7 & 132098 & 16641 & 37 & 0.1898 & 132098 & 49409 & 40 & 0.1395\\
8 & 526338 & 66049 & 37 & 0.1888 & 526338 & 197121 & 40 & 0.1395\\
\hline
\end{tabular}
\caption{Preconditioned MINRES iterations and discrete inf--sup constant approximations for Example \ref{ex:2d_cavity} 
and preconditioner $\mathcal{P}_1$}.
\label{tab:2d_infsup}
\end{table}

\begin{table}
\begin{tabular}{r| r r r r | r r r r}
\hline
& \multicolumn{4}{c|}{$\P_2$--$\P_1$} & \multicolumn{4}{c}{$\P_2$--$\P_1^\ast$}\\
Grid & Velocity dof & Pressure dof & Iters & $\gamma^2$ & Velocity dof & Pressure dof & Iters & $\gamma^2$\\
\hline
4 & 2178 & 289 & 42 & 0.1961 & 2178 & 801 & 59 & 0.1404\\
5 & 8450 & 1089 & 42 & 0.1940 & 8450 & 3137 & 59 & 0.1443\\
6 & 33282 & 4225 & 44 & 0.1925 & 33282 & 12417 & 117 & 0.0467\\
7 & 132098 & 16641 & 45 & 0.1917 & 132098 & 49409 & 152 & 0.0259\\
8 & 526338 & 66049 & 45 & 0.1912 & 526338 & 197121 & 197 & 0.0064\\
\hline
\end{tabular}
\caption{Preconditioned MINRES iterations and discrete inf--sup constant approximations for Example \ref{ex:2d_cavity} 
and preconditioner $\mathcal{P}_2$.}
\label{tab:2d_infsup_sgscheb}
\end{table}

\begin{figure}
\begin{center}
\begin{subfigure}[b]{0.4\textwidth}
         \centering
         \includegraphics[width=\textwidth]{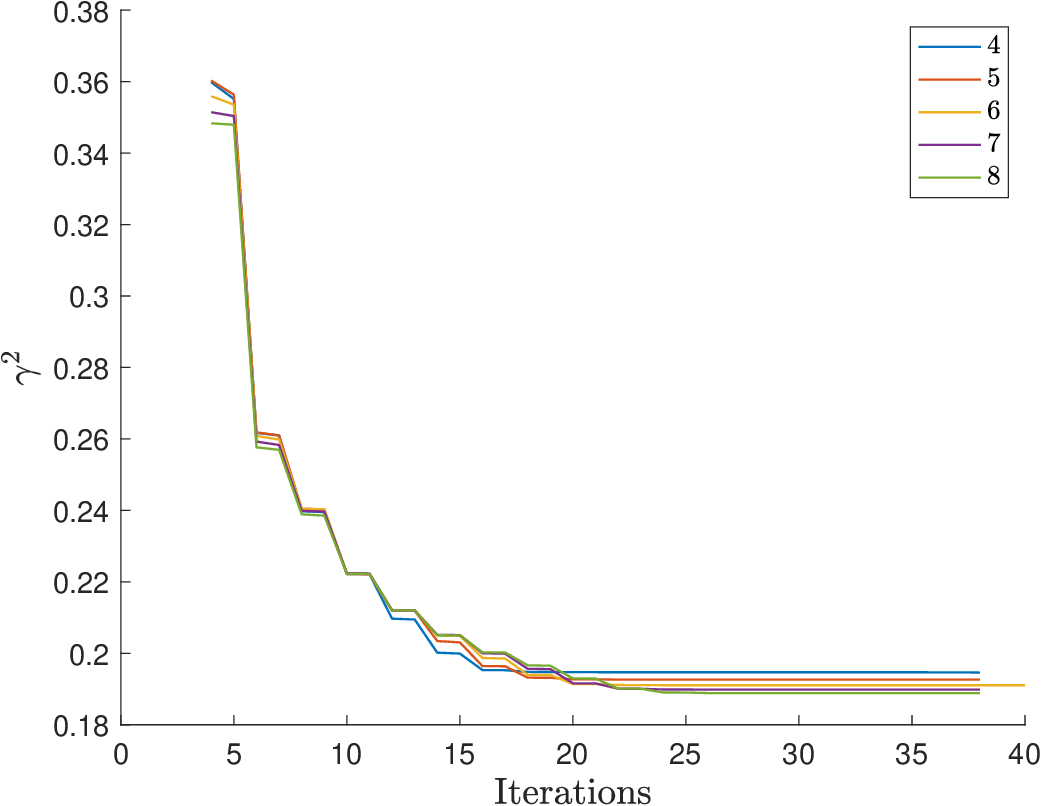}
     \end{subfigure}
     \begin{subfigure}[b]{0.4\textwidth}
         \centering
         \includegraphics[width=\textwidth]{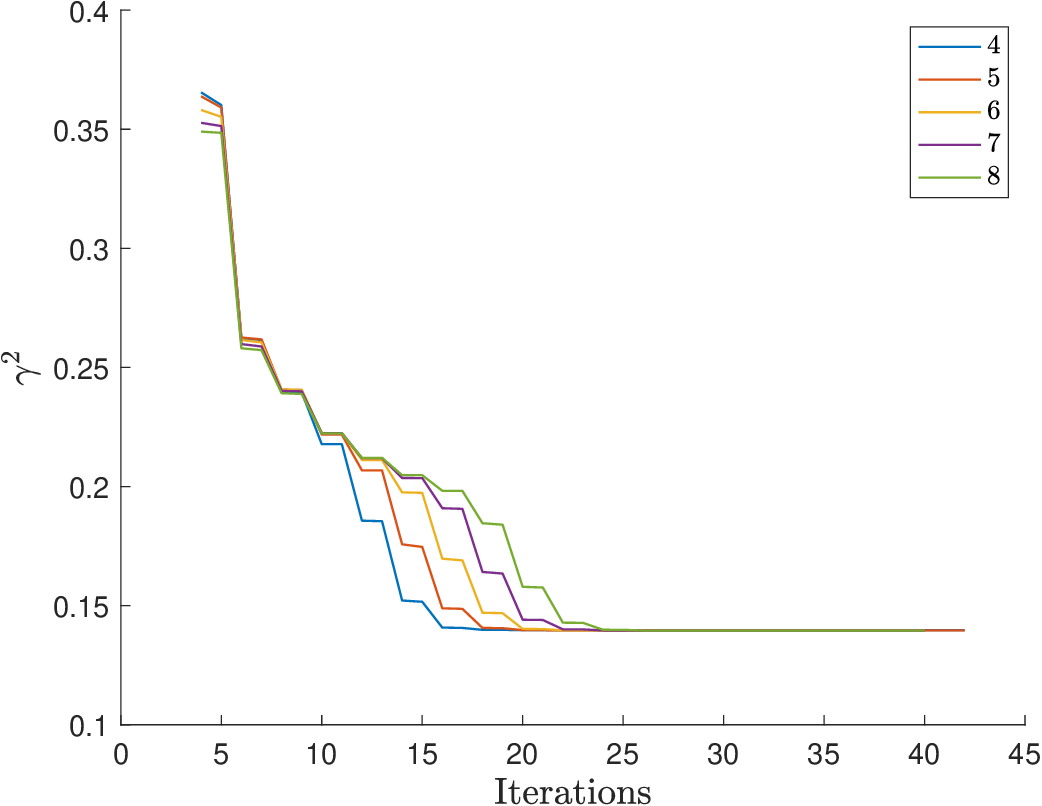}
     \end{subfigure}
     \end{center}
\caption{EST-MINRES estimates of the discrete inf--sup constant $\gamma^2$ at each iteration for Example \ref{ex:2d_cavity} 
and preconditioner $\mathcal{P}_1$ 
with $\P_2$--$\P_1$ (left) and $\P_2$--$\P_1^\ast$ (right) elements for the grids specified in Table \ref{tab:2d_infsup}.}
\label{fig:2d_infsup}
\end{figure}

The results for Example \ref{ex:3d_cavity} are given in Tables \ref{tab:3d_infsup} and 
\ref{tab:3d_infsup_sgscheb} and 
Fig.~\ref{fig:3d_infsup}.  Preconditioned MINRES iteration counts for $\mathcal{P}_1$ are again mesh independent and, for 
all but the coarsest mesh, are almost identical for the two element pairs. 
The discrete inf--sup constant approximations again appear to converge from above. Now, at least for the 
grids presented here, the estimates of 
$\gamma^2$  
for Taylor--Hood elements seem to ``track'' the  augmented Taylor--Hood estimates, i.e., the 
Taylor--Hood approximation on grid $j$ is almost the same as the augmented Taylor--Hood estimate on 
grid $j-1$. 
Table \ref{tab:3d_infsup_sgscheb} shows that using the cheaper $\mathcal{P}_2$ also results in mesh-independent convergence for 
$\Q_2$--$\Q_1$ elements.
Additionally, the inf--sup constant approximations seem to ``track'' the Taylor--Hood estimates in Table~\ref{tab:3d_infsup}. 
Now, however, there is more modest growth in the iteration counts when this approximate preconditioner is 
used for $\Q_2$--$\Q_1^\ast$ elements and the inf--sup approximations are much closer to those in Table~\ref{tab:3d_infsup}.

We also see from Fig.~\ref{fig:2d_infsup}, which plots the inf--sup approximations at each iteration of preconditioned MINRES, 
for preconditioner $\mathcal{P}_1$,
that the discrete inf--sup constant is already reasonably well approximated after 25 iterations.

\begin{table}
\begin{tabular}{r |r r r r | r r r r}
\hline
& \multicolumn{4}{c|}{$\Q_2$--$\Q_1$} & \multicolumn{4}{c}{$\Q_2$--$\Q_1^\ast$}\\
Grid & Velocity dof & Pressure dof & Iters & $\gamma^2$ & Velocity dof & Pressure dof & Iters & $\gamma^2$\\
\hline
3 & 2187 & 125 & 45 & 0.1128 & 2187 & 189 & 48 & 0.1122\\
4 & 14739 & 729 & 51 & 0.1122 & 14739 & 1241 & 52 & 0.1115\\
5 & 107811 & 4913 & 51 & 0.1116 & 107811 & 9009 & 52 & 0.1110\\
\hline
\end{tabular}
\caption{Preconditioned MINRES iterations and discrete inf--sup constant approximations for Example \ref{ex:3d_cavity} 
and preconditioner $\mathcal{P}_1$.}
\label{tab:3d_infsup}
\end{table}

\begin{table}
\begin{tabular}{r |r r r r | r r r r}
\hline
& \multicolumn{4}{c|}{$\Q_2$--$\Q_1$} & \multicolumn{4}{c}{$\Q_2$--$\Q_1^\ast$}\\
Grid & Velocity dof & Pressure dof & Iters & $\gamma^2$ & Velocity dof & Pressure dof & Iters & $\gamma^2$\\
\hline
3 & 2187 & 125 & 50 & 0.1132 & 2187 & 189 & 53 & 0.1126\\
4 & 14739 & 729 & 54 & 0.1128 & 14739 & 1241 & 59 & 0.1124\\
5 & 107811 & 4913 & 56 & 0.1123 & 107811 & 9009 & 69 & 0.1094\\
\hline
\end{tabular}
\caption{Preconditioned MINRES iterations and discrete inf--sup constant approximations for Example \ref{ex:3d_cavity} 
and preconditioner $\mathcal{P}_2$.}
\label{tab:3d_infsup_sgscheb}
\end{table}

\begin{figure}
\begin{center}
\begin{subfigure}[b]{0.4\textwidth}
         \centering
         \includegraphics[width=\textwidth]{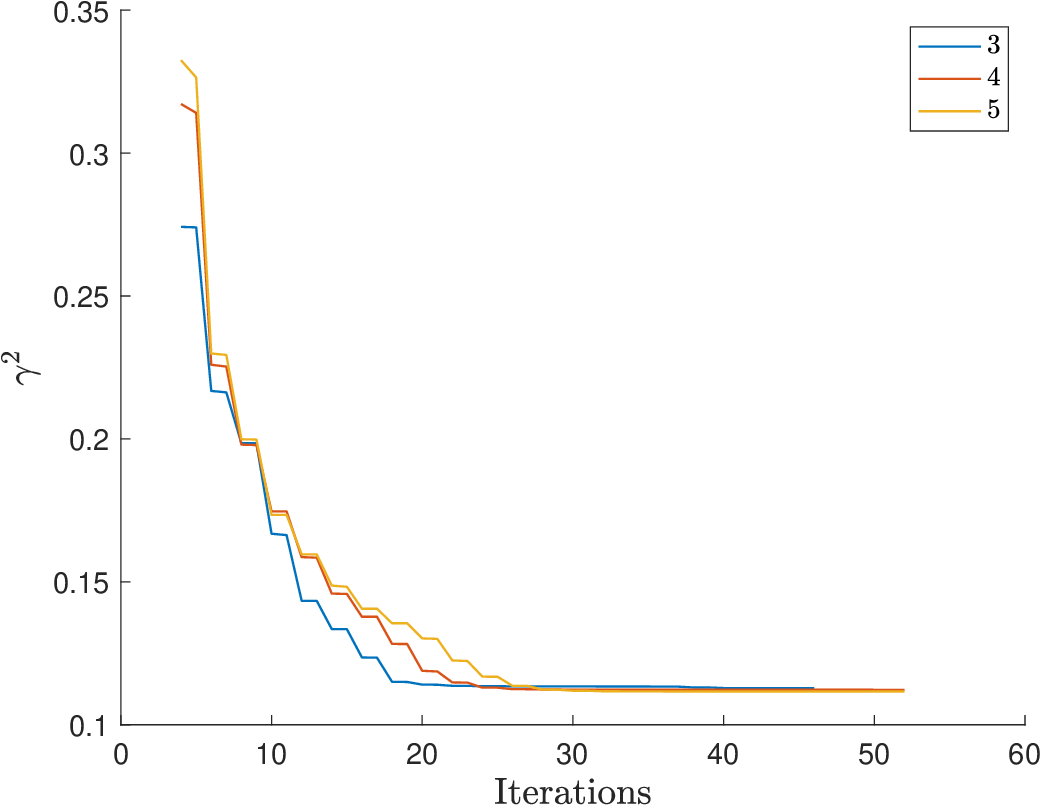}
     \end{subfigure}
     \begin{subfigure}[b]{0.4\textwidth}
         \centering
         \includegraphics[width=\textwidth]{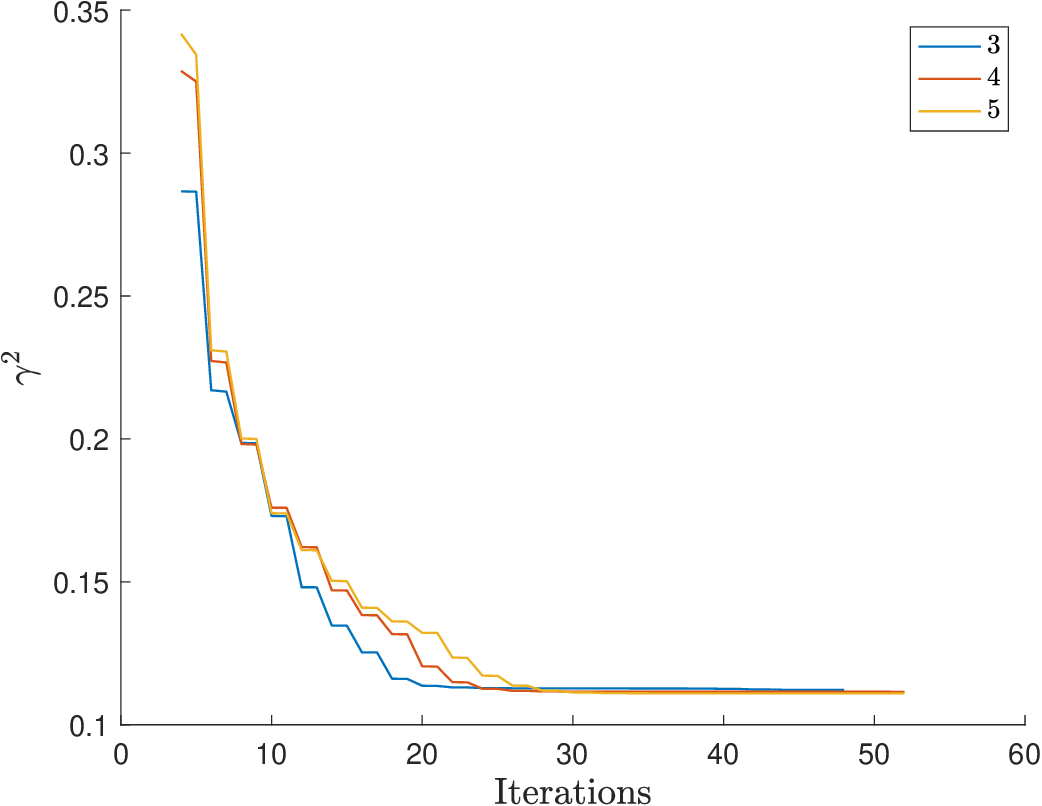}
     \end{subfigure}
     \end{center}
\caption{EST-MINRES estimates of the discrete inf--sup constant $\gamma^2$ at each iteration for Example \ref{ex:3d_cavity} 
and preconditioner $\mathcal{P}_1$
    with $\Q_2$--$\Q_1$ (left) and $\Q_2$--$\Q_1^\ast$ (right) elements for the grids specified in Table \ref{tab:3d_infsup}.}
\label{fig:3d_infsup}
\end{figure}

\section{Two-field pressure preconditioning strategies for Oseen flow.} \label{sec:oseen}

In this section we consider the Oseen problem
\begin{alignat*}{2}
- \nu \triangle \vfld{u} + \vfld{w} \cdot \nabla \vfld{u}+ \nabla p &= \vfld{0} & \quad & \text{ in } \Omega,\\
\nabla\cdot \vfld{u} & = 0& \quad & \text{ in } \Omega,\\
\vfld{u} &= \vfld{g} & \quad & \text{ on } \partial \Omega,
\end{alignat*}
that arises from applying fixed point iteration to the 
Navier--Stokes equations modelling  steady flow in a  channel domain
$\Omega \subset \R^2$ with viscosity parameter $\nu$.
The Oseen problem is a linear elliptic system of PDEs and assuming  
a divergence-free convection field $\vfld{w}$ and 
a smooth (e.g., polygonal) domain $\Omega$, 
has a unique weak solution for all positive values of the  viscosity parameter.\footnote{Uniquely 
defined  solutions to the underlying steady-state Navier-Stokes problem are only guaranteed when the 
viscosity parameter is sufficiently large; see~\cite[section\,8.2.1]{elman14}.  \bookref}  
The inf--sup stability of our two-field mixed approximation 
is a sufficient condition for convergence of the mixed  approximation  to the 
weak solution of the Oseen problem as $h\to 0$.

Our focus will be on the  associated
discrete matrix system, 
\begin{align} \label{oseen-system}
\left[
\begin{array}{@{}cc@{}}
\boldsymbol{F}  & B^T \\ B & 0
\end{array}
\right]
\left[
\begin{array}{@{}c@{}}
\bfu \\ \bfp
\end{array}
\right]
=
\left[
\begin{array}{@{}c@{}}
\bff \\ \bfg
\end{array}
\right],
\end{align}
where the unknown coefficient vector involves   
the discrete velocity vector $\bfu \in \R^{\nvelocity}$ and the
two-field pressure vector $\bfp \in \R^{n_p}$. 
The right-hand side vectors $\bff$ and $\bfg$ are  associated with the 
specified boundary velocity  (as in the Stokes flow case).
References to the ``residual norm''  throughout this section will
refer  to the standard Euclidean norm  for finite dimensional vectors. Thus, 
for the system \eqref{oseen-system} the convergence of our  iterative
solver strategies is assessed by monitoring 
\begin{align} \label{residual-norm}
\| \bfz \| :=
\left\| 
\left[
\begin{array}{@{}l@{}}
\bff - \boldsymbol{F}  \bfu   -  B^T \bfp \\
\bfg -B \bfu 
\end{array}
\right]
\right\|_{\ell_2} .
\end{align}

The nonsymmetry of the matrix $\boldsymbol{F}$  means that the
iterative solver of choice is GMRES (see~\cite[section\,9.1]{elman14}) \bookref 
together with a block preconditioning operator of the form
\begin{align} \label{nse-precon-diag}
\MM =
\left[
\begin{array}{@{}cc@{}}
\boldsymbol{M} & B^T \\ 0 & - M_S
\end{array}
\right],
\end{align}
where $\boldsymbol{M}$ is an optimal complexity (multigrid) operator effecting the action 
of the inverse of the matrix  $\boldsymbol{F}$ and 
$M_S$ is an optimal complexity approximation  of the Schur complement matrix
$B\boldsymbol{F}^{-1}B^T$.
We will discuss results for two representative flow problems herein. 

\begin{figure}[!ht]
       \begin{center}
	\includegraphics[width=0.95\linewidth]{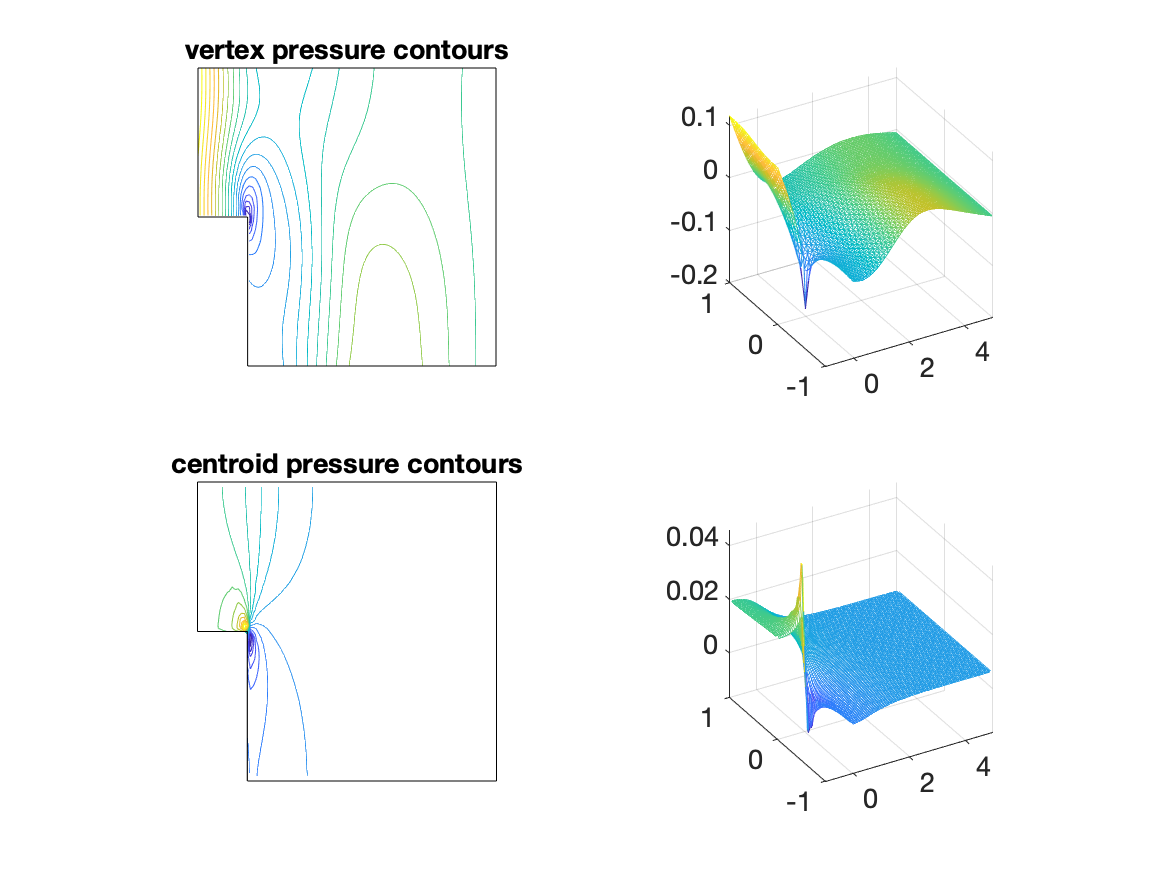}		 
	\end{center}
\caption{Representative $\P_2$--$\P_1^*$ pressure field solution for flow over a step 
($\RE = 100$) computed 
on a uniform   mesh with  5632 right-angled  triangles and 2945 vertices.}
\label{fig.testproblem1}
\end{figure}

\begin{examp}[flow over a step]\label{ex.step}
We consider an inflow--outflow problem
defined  on the domain  $D=[-1,0) \times [0,1]  \cup [0,5]\times[-1,1]$   with the viscosity parameter 
$\nu$ set to $1/50$ and  a parabolic velocity 
$u_x=  4y (1 - y)$ specified on the inflow boundary $x=-1$.  The  construction of  the standard
weak formulation (see \cite[p.\,127]{elman14}) \bookref 
gives rise to a  natural  boundary condition that fixes the hydrostatic pressure
level by weakly enforcing a zero mean pressure at the outflow boundary $x=5$. (The other 
boundary conditions are associated with fixed walls.)  
We consider  the discrete system \eqref{oseen-system}
that results after 5 fixed-point iterations  of the discretised Navier--Stokes system
starting from the corresponding Stokes flow solution.
We generate  solutions using  $\Q_2$--$\Q_1^\ast$  or $\P_2$--$\P_1^\ast$ augmented Taylor--Hood 
mixed approximation with the domain subdivided uniformly into (bisected) squares. 
The resulting system \eqref{oseen-system} is singular with
the one-dimensional pressure nullspace described in section~2. \backref
The two components of the pressure solution computed on a representative $\P_2$--$\P_1^\ast$ mesh 
are illustrated in Fig.~\ref{fig.testproblem1}.  The centroid pressure  values  are  an  order of magnitude
smaller than  the vertex pressure in all the elements---they provide the ``corrections'' to the
vertex pressures that are needed  to ensure  local (elementwise)  conservation of mass.
 \end{examp}

\begin{examp}[two-dimensional enclosed flow]\label{ex.cavity}
We consider the  classical driven-cavity enclosed flow problem
defined  on the domain  $D=[-1,1]^2$   with the viscosity parameter
$\nu$ set to $1/100$ and a nonzero tangential velocity
$u_y= 1- x^4$ specified on the top of the cavity. We take $\P_2$--$\P_1^\ast$
augmented Taylor--Hood mixed approximation with the domain subdivided uniformly into
$n^2$  bisected squares. We consider  the discrete system \eqref{oseen-system}
 that arises after 5 fixed-point iterations  starting from the corresponding Stokes flow solution.
 The discrete system is singular with a  two-dimensional pressure nullspace, corresponding
 to a constant vertex pressure and a constant centroid pressure.
The contour plot on the left in Fig.~\ref{fig.testproblem2} shows  the element divergence errors
 $\|\nabla \cdot \vfld{u}_h\|_{L^2(\triangle)}$  computed on a coarse mesh ($n=32$). The associated centroid
 ``correction'' pressure field contours shown on the plot  in the right can be seen to provide
 a good indication of the regions where the divergence error is concentrated.  As in
  Fig.~\ref{fig.testproblem1} the centroid  pressure  values  are  an  order of magnitude smaller
 than  the vertex pressure in all elements, so  the overall pressure field is visually identical to
 the $\P_1$ pressure field (not shown here).
 \end{examp} 
 \begin{figure}[!t]
       \begin{center}
	\includegraphics[width=0.7\linewidth]{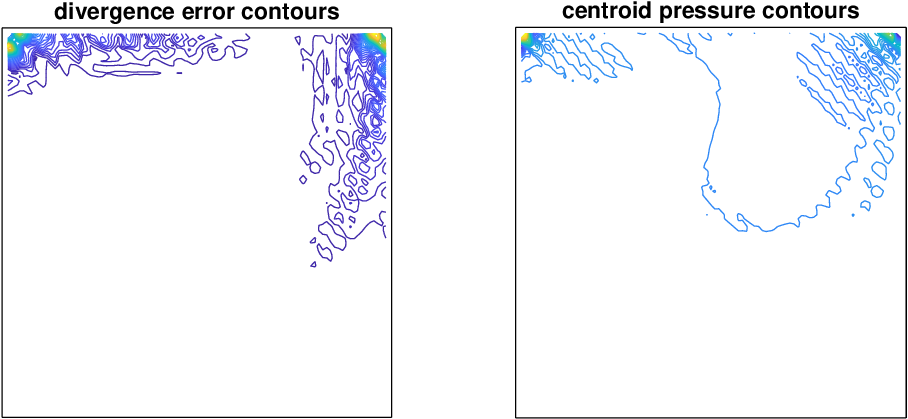}		 
	\end{center}
\caption{Comparison of divergence error and $\P_2$--$\P_1^*$  centroid pressure solution for
cavity flow  ($\RE=200$) computed on a uniform  mesh with 2048 right-angled  triangles and 1089 vertices.}
\label{fig.testproblem2}
\end{figure}

\subsection{Pressure convection-diffusion preconditioning for Oseen flow.} \label{sec:pcd}
There are two alternative ways of approximating the key matrix
$B\boldsymbol{F}^{-1}B^T$ in the case that $B^T$ is generated by a two-field
pressure approximation so that $B^T = [ B_1^T, B_0^T ]$.
 The focus will be on {pressure convection-diffusion} (PCD)
 preconditioning  in this section. 
Both  of the  Schur complement  approximations can be motivated by starting with the Oseen 
matrix operator (\ref{oseen-system}) and observing that the 
diagonal blocks of $\boldsymbol{F}$
are discrete representations of the convection--diffusion operator
\begin{align} \label{generic-conv-diff}
\mathcal{L} = -\nu \nabla^2 + \vfld{w}_h \cdot \nabla ,
\end{align}
defined on the velocity space. In practical calculations $\nu>0$ is proportional to the 
inverse of the flow Reynolds number  and
$\vfld{w}_h$ is the discrete approximation to the flow 
velocity computed at the most recent nonlinear iteration.
The PCD approximation  supposes that there is an analogous operator
to  (\ref{generic-conv-diff}), namely
\begin{align}\label{pressure-conv-diff}
\mathcal{L}_p = (-\nu \nabla^2 + \vfld{w}_h \cdot \nabla)_p
\end{align}
defined on the two components of the augmented  pressure space. 

To this end,  defining $\{\phi_j\}_{j=1}^{n_{1}}$   to be  the basis for the 
$C^0$ pressure discretisation, we construct matrices $Q_1$ and $F_1$ so that
\begin{align*}
Q_1 &=[q_{1,ij}],{\quad}q_{1,ij} = \int_{\Omega}
\phi_j \,  \phi_i \\
F_1 &= [f_{1,ij}], \quad f_{1,ij} = 
\nu \int_{\Omega} \nabla \phi_j \cdot \nabla \phi_i +
\int_{\Omega} (\vfld{w}_h \cdot \nabla \phi_j) \, \phi_i .
\end{align*}
We then note that if the commutator  with the divergence operator
\begin{align*} 
\mathcal{E} = \nabla \cdot (-\nu \nabla^2 + \vfld{w}_h \cdot \nabla) -
(-\nu \nabla^2 + \vfld{w}_h \cdot \nabla)_p \, \nabla \cdot
\end{align*}
is  small then we have the approximation
\begin{align} \label{discrete-commutator1}
0 \approx (Q_1^{-1}B_1)\, (\boldsymbol{M}^{-1}\boldsymbol{F}) -
(Q_1^{-1}F_1) \, (Q_1^{-1}B_1) 
\end{align}
where $\boldsymbol{M}$ is the diagonal of the mass matrix associated with
the basis representation of the velocity space.\footnote{The inverse of the mass matrix is
a dense matrix. The diagonal of   the mass matrix is a spectrally equivalent matrix operator
with a sparse (diagonal)  inverse.} Rearranging  
 (\ref{discrete-commutator1}) gives the first Schur complement approximation
 \begin{align} \label{discrete-commutator1x}
B_1 \, \boldsymbol{F}^{-1} B^T \approx  Q_1 F_1^{-1} (B_1\, \boldsymbol{M}^{-1} B^T) .
\end{align}

A discrete version of  $\mathcal{L}_p$  for
the piecewise constant pressure space can be generated by considering the
jumps in pressure across inter-element boundaries; see 
\cite[pp.\,268--370]{elman14}. \bookref 
To this end, defining $\{\varphi_j\}_{j=1}^{n_{0}}$   to be  the (indicator function)  basis for the 
discontinuous  pressure, we construct matrices $Q_0$ and $F_0$ via
\begin{align*}
Q_0 &=[q_{0,ij}],{\quad}q_{0,ij} = \int_{\Omega}
\varphi_j \,  \varphi_i  =  
\begin{cases} |T_i | & \hbox{if }  i=j , \\
                      \; 0 & \hbox{otherwise},  \end{cases} \\
F_0 &= [f_{0,ij}], \quad f_{0,ij} = 
\nu \sum_{T\in \TT_h} \int_{T} \nabla \varphi_j \cdot \nabla \varphi_i +
\sum_{T\in \TT_h}  \int_{T} (\vfld{w}_h \cdot \nabla \varphi_j) \, \varphi_i 
\end{align*}
and note that if the commutator  with the divergence operator
is  small then we have 
\begin{align} \label{discrete-commutator2}
0 \approx (Q_0^{-1}B_0)\, (\boldsymbol{M}^{-1}\boldsymbol{F}) -
(Q_0^{-1}F_0) \, (Q_0^{-1}B_0) ,
\end{align}
suggesting the second Schur complement approximation
 \begin{align} \label{discrete-commutator2x}
B_0\, \boldsymbol{F}^{-1} B^T \approx  Q_0 F_0^{-1} (B_0 \, \boldsymbol{M}^{-1} B^T) .
\end{align}

Combining \eqref{discrete-commutator1x} with \eqref{discrete-commutator2x} then gives
a two-field PCD approximation
\begin{align} \label{twofield-pcd}
B\boldsymbol{F}^{-1}B^T \approx M_S :=
\left[
\begin{array}{@{}cc@{}}
Q_1 & 0 \\  0 & Q_0 \end{array}
\right]
\left[
\begin{array}{@{}cc@{}}
F_1^{-1} & 0 \\  0 & F_0^{-1}  \end{array}
\right]
 B  \boldsymbol{M}^{-1} B^T .
\end{align}

Two features of the PCD approximation \eqref{twofield-pcd} are worth noting.
The first point is that the coupling  between the pressure components is 
represented by  the $2\times 2$ block matrix  $B \boldsymbol{M}^{-1} B^T$ rather than
by the pressure mass matrix $\pressuremass$. 
The second key point is that the matrices $M_S$ and 
$B  \boldsymbol{F}^{-1} B^T$
have the same nullspace, independent of the nature of underlying flow problem that is 
being solved. 

 \begin{figure}[!th]
       \begin{center}	
	\includegraphics[width=0.35\linewidth]{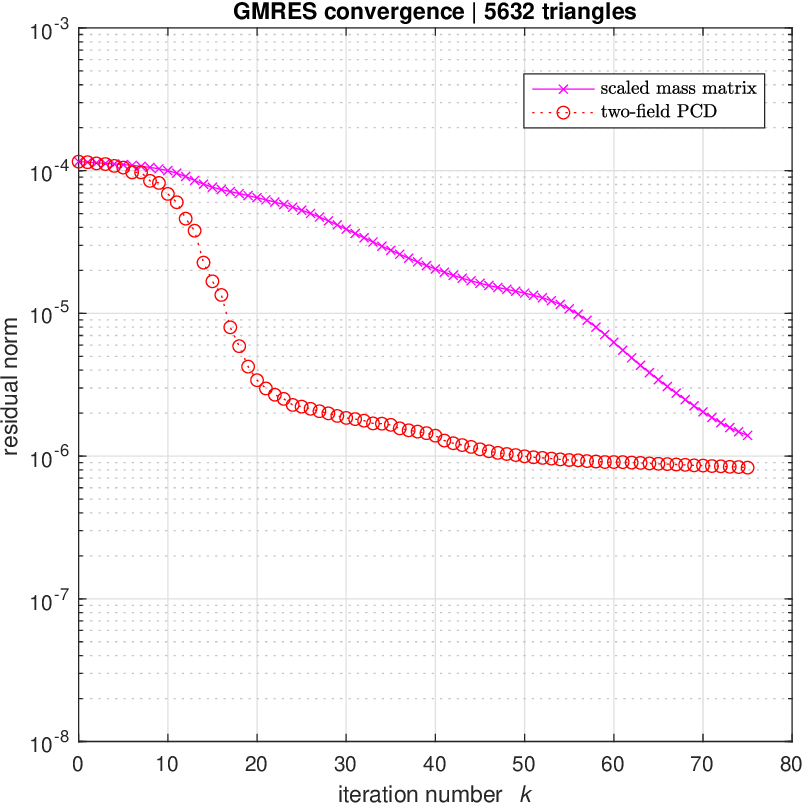}		
	\includegraphics[width=0.35\linewidth]{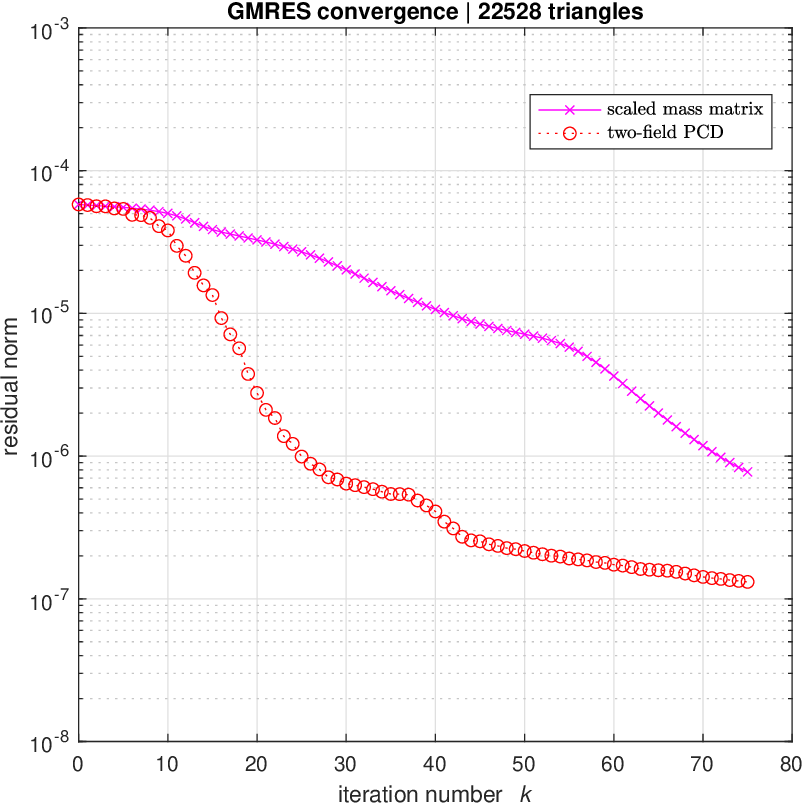}	 
	\end{center}
\caption{Absolute residual reduction  for test problem~3  
when computing $\P_2$--$\P_1^*$   solutions
using preconditioners  $\MM_1$ ({\textcolor{magenta}x}) 
or $\MM_2$ ({\textcolor{red}o})  on  two nested meshes.}
\label{fig.testproblem1_p2p1}
\end{figure}

The PCD approximation  in \eqref{twofield-pcd}  is  imperfect in practice.
To illustrate this, representative convergence histories  that arise in solving the inflow-outflow problem
using $\P_2$--$\P_1^*$  approximation (shown in Fig.~\ref{fig.testproblem1})
 are presented in Fig.~\ref{fig.testproblem1_p2p1}.  Taking the solution from the
 previous Picard iteration as the initial guess, we note  that the initial residual 
 norm of the target matrix system \eqref{oseen-system} is close to $10^{-4}$
 independent of the spatial discretisation.
 Convergence plots are presented for two preconditioning strategies, namely
\begin{align} \label{nse-precon-ref}
\MM_1 =
\left[
\begin{array}{@{}cc@{}}
\boldsymbol{F} & B^T \\ 0 & - {1\over \nu} \pressuremass
\end{array}
\right], \quad
\MM_2 =
\left[
\begin{array}{@{}cc@{}}
\boldsymbol{F} & B^T \\ 0 & - M_S
\end{array}
\right],
\end{align}
with $M_S$ defined in \eqref{twofield-pcd}. 
The first strategy is the  block triangular extension of  the 
Stokes preconditioning strategy discussed in section~2. \backref
We note that the resulting convergence is very slow---around 50 iterations are 
required to reduce the residual norm by an order of magnitude---but is independent of the 
discretisation level.  In contrast we see that the PCD preconditioning  strategy 
has two distinctive phases of convergence behaviour. An initial phase of relatively 
fast convergence is followed by a secondary phase where GMRES 
stagnates. We  hypothesise that this stagnation is a consequence of the 
ill-conditioning of the  eigenvectors of the matrix operator $M_S$. A
notable feature is that the onset of the stagnation is delayed when
solving the same problem on  a finer grid. The two-phase convergence 
behaviour is ubiquitous---the same pattern  is seen  using 
rectangular elements and the stagnation  does not go away when 
the viscosity parameter is increased from $1/50$ to $1/5$. An alternative strategy is
clearly needed!

One way of designing a more robust PCD preconditioning strategy for a two-field pressure 
approximation   is  to exploit the fast convergence of the PCD
approximation for the unaugmented  Taylor-Hood approximation.
The starting point for  such a strategy  is to rewrite the system \eqref{oseen-system} in the form
\begin{align} \label{full-system}
\left[
\begin{array}{@{}ccc@{}}
\boldsymbol{F}  & B_1^T & B_0^T\\ B_1 & 0 & 0 \\ B_0 & 0 & 0
\end{array}
\right]
\left[
\begin{array}{@{}l@{}}
\bfu \\ \bfp_1 \\  \bfp_0
\end{array}
\right]
=
\left[
\begin{array}{@{}l@{}}
\bff \\  \bf{0} \\ \bf{0}
\end{array}
\right] .
\end{align}
The proposed solution algorithm is then a two-stage process.

\begin{itemize} 
\item{\bfseries Input:}  residual reduction tolerance $\eta$ and reduced system residual factor $c$
\item[Step I] 

Generate  a PCD  solution  to  the {\it reduced system} 
\begin{align} \label{reduced-system}
\left[
\begin{array}{@{}cc@{}}
\boldsymbol{F}  & B_1^T \\ B_1 & 0
\end{array}
\right]
\left[
\begin{array}{@{}c@{}}
\bfu_1 \\ \bfq_1
\end{array}
\right]
=
\left[
\begin{array}{@{}c@{}}
\bff \\  \bf{0}
\end{array}
\right]
\end{align}
using  the Schur complement approximation \eqref{discrete-commutator1}, stopping
the GMRES iteration when the residual is reduced by a factor of $c\eta$.

\item[Step II] 
Generate  a  solution  to the target system 
\eqref{full-system} with residual tolerance  $\eta$ using 
preconditioning strategy  $\MM_1$ in \eqref{nse-precon-ref} with the refined  initial guess 
$\lbrack \bfu_1^*  , \bfq_1^*, {\bf 0} \rbrack $.

\item
{\bfseries Output:}  refined solution $\lbrack \bfu^*  , \bfp_1^*,  \bfp_0^* \rbrack $
\end{itemize}

\begin{figure}[!ht]
       \begin{center}
	\includegraphics[width=0.35\linewidth]{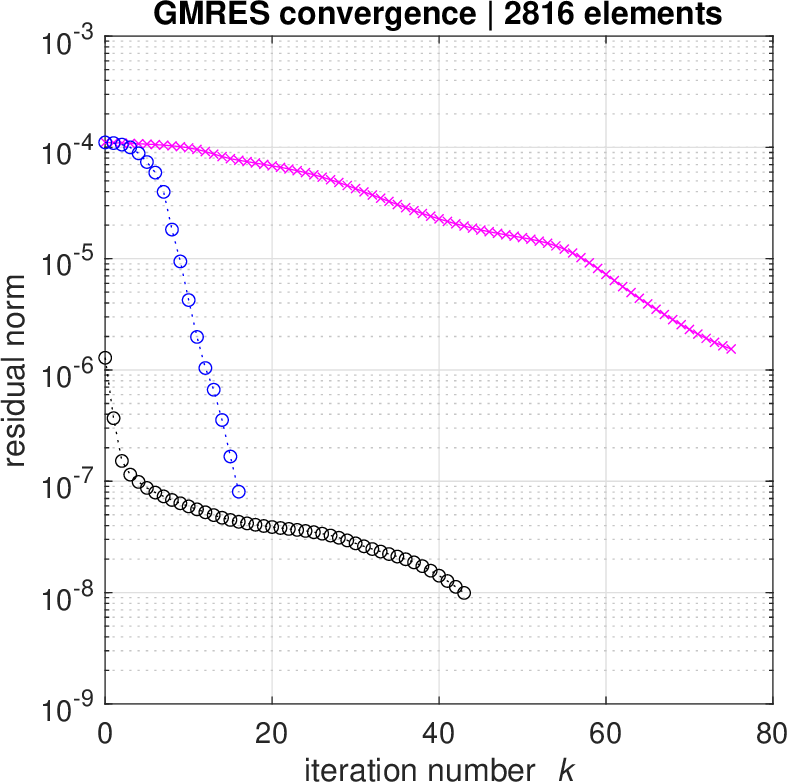}		
	\includegraphics[width=0.35\linewidth]{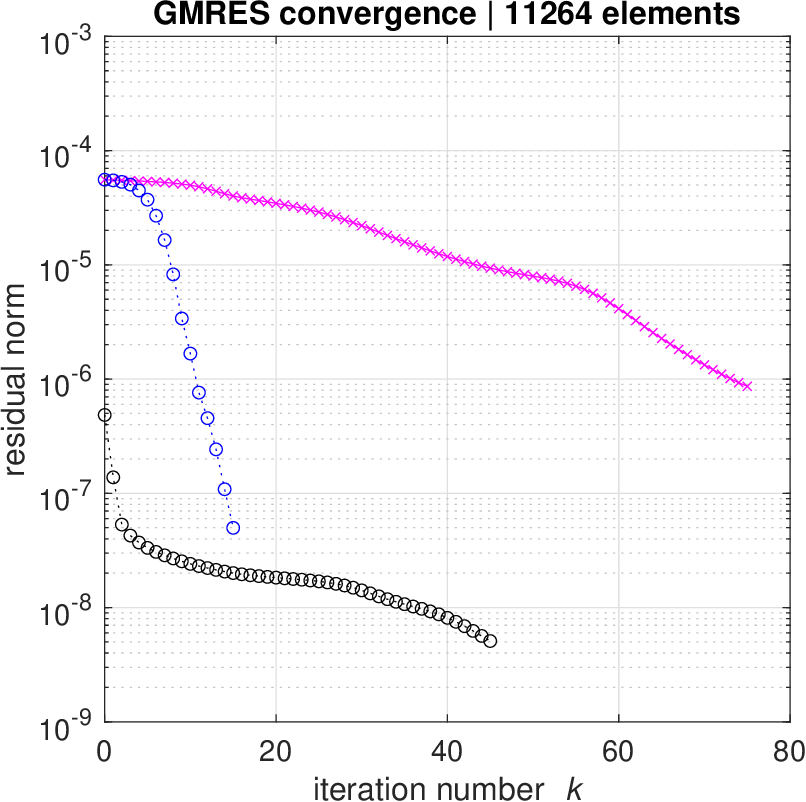}	 
	\end{center}
\caption{Absolute residual reduction  for test problem~3  
when computing  $\Q_2$--$\Q_1^*$   solutions using 
using preconditioners  $\MM_1$ ({\textcolor{magenta}x}) 
or two-stage PCD  (Step I: {\textcolor{blue}o}; Step II: {\textcolor{black}{o}})
  on  two nested meshes.}
\label{fig.testproblem1_q2q1}
\end{figure}

\begin{figure}[!ht]
       \begin{center}
	\includegraphics[width=0.35\linewidth]{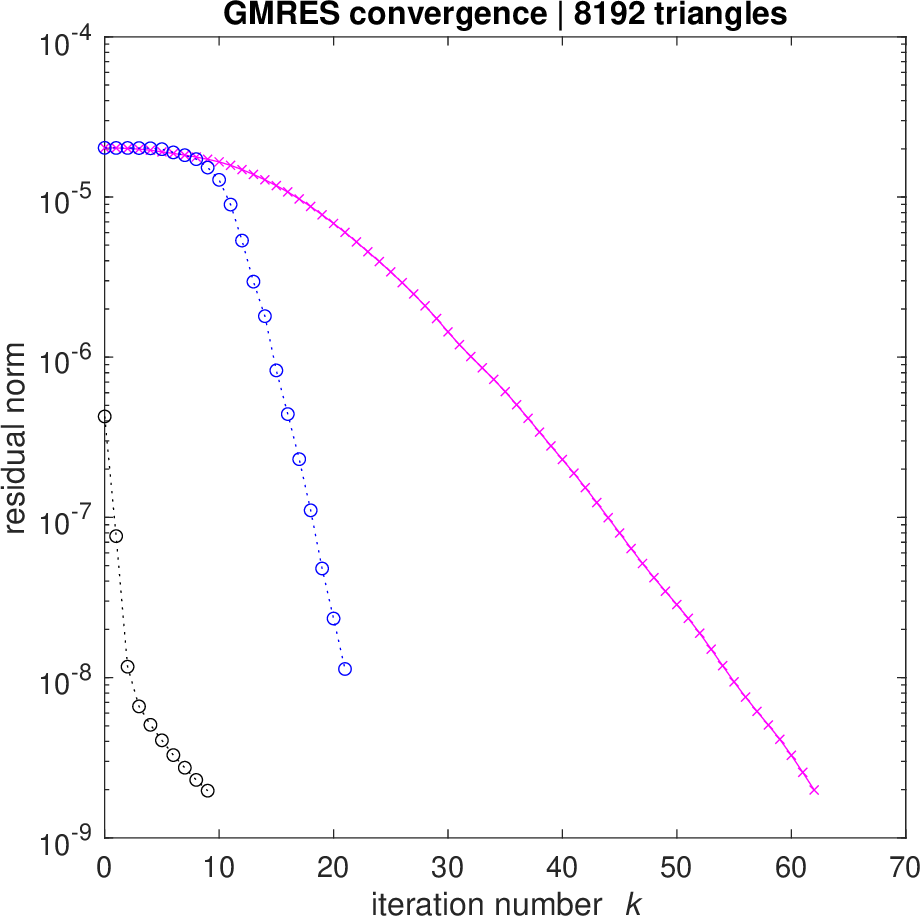}		
	\includegraphics[width=0.35\linewidth]{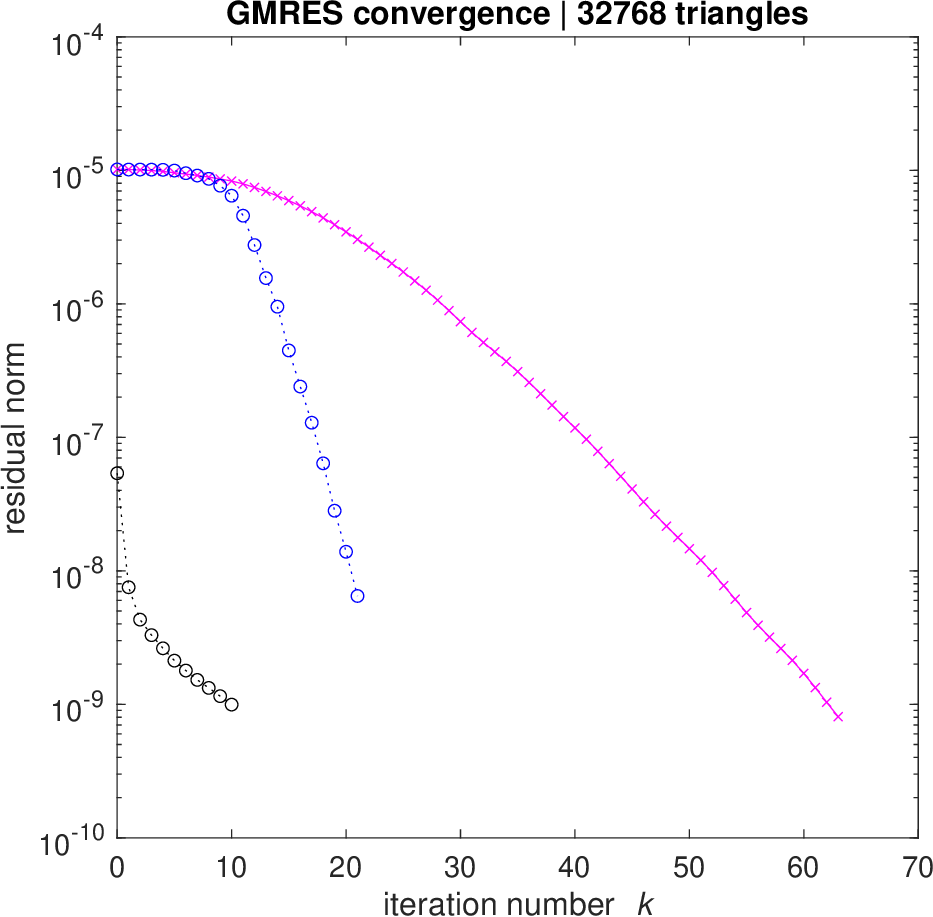}	 
	\end{center}
\caption{Absolute residual reduction  for test problem~4  
when computing  $\P_2$--$\P_1^*$   solutions using 
using preconditioners  $\MM_1$ ({\textcolor{magenta}x}) 
or two-stage PCD  (Step I: {\textcolor{blue}o}; Step II: {\textcolor{black}{o}})
 on  two nested meshes.}
\label{fig.testproblem2_p2p1}
\end{figure}

Sample  results  generated using this strategy with  tolerance 
$\eta$ set to $10^{-4}$ and $c$ set to 10 are presented  in Fig.~\ref{fig.testproblem1_q2q1}.   
Sample results for the cavity test problem
are presented in Fig.~\ref{fig.testproblem2_p2p1}.   
The results in Fig.~\ref{fig.testproblem1_q2q1} and Fig.~\ref{fig.testproblem2_p2p1}
are representative of the two-stage convergence profiles that are generated when solving 
these test problems at other Reynolds numbers.  Our experience is that the level of
residual reduction  is perfectly robust with regards to
the spatial discretisation---typically giving  smaller iteration counts when the mesh resolution 
is increased (a known feature of PCD preconditioning). The convergence rates of both
stages of the algorithm deteriorate slowly when the Reynolds number is increased. 
Our strategy for terminating the first stage of the iteration is  motivated by the following result.
\begin{proposition}\label{pr.residualbound}
The residual error  $\| \bfz^* \|$
associated with the intermediate  solution $\lbrack \bfu^*_1  , \bfq_1^*,  \bf{0} \rbrack $
to the discrete system \eqref{full-system} satisfies the bound
\begin{align} \label{resbound}
\| \bfz^* \|^2  &\leq c^2 \eta^2 \,  \| \bff \|^2  + \| B_0 \bfu_1^* \|^2  ,
\end{align}
where  $\| \bff \|$ is the initial residual error associated with  a zero initial vector.

\begin{proof}
The vector  $ \bfz^*$  associated with the intermediate solution is the 
three-component vector
\begin{align} \label{res-system}
\left[
\begin{array}{@{}l@{}}
\bfr^* \\ \bfr_1^* \\ \bfr_0^*
\end{array}
\right]
=
\left[
\begin{array}{@{}l@{}}
\bff \\  \bf{0} \\ \bf{0}
\end{array}
\right] 
-
\left[
\begin{array}{@{}ccc@{}}
\boldsymbol{F}  & B_1^T & B_0^T\\ B_1 & 0 & 0 \\ B_0 & 0 & 0
\end{array}
\right]
\left[
\begin{array}{@{}l@{}}
\bfu_1^* \\ \bfq_1^* \\  \bf{0}
\end{array}
\right] .
\end{align}
The stopping test for solving the reduced system ensures that 
\begin{align}
 \| \bfr^* \|^2   +  \| \bfr_1^* \|^2  \leq c^2 \eta^2 \, \| \bff \|^2.
 \end{align}
 Thus we have 
 \begin{align}
\| \bfz^* \|^2 =   \| \bfr^* \|^2   +  \| \bfr_1^* \|^2  
  +   \| \bfr_0^* \|^2  \leq c^2 \eta^2  \, \| \bff \|^2 +  \| B_0 \bfu_1^* \|^2.  
 \end{align}
 \end{proof}
\end{proposition}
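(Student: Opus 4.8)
The plan is entirely elementary: the proposition is a bookkeeping identity for block $\ell_2$ norms, combined with the GMRES stopping rule used in Step~I. The whole argument is to expand the residual of \eqref{full-system} at the intermediate iterate, to recognise two of its three block components as the residual of the reduced system \eqref{reduced-system}, and then to add up.

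First I would write \eqref{res-system} out componentwise, obtaining $\bfr^* = \bff - \boldsymbol{F}\bfu_1^* - B_1^T\bfq_1^*$, $\bfr_1^* = -B_1\bfu_1^*$ and $\bfr_0^* = -B_0\bfu_1^*$. The key observation is that the pair $(\bfr^*,\bfr_1^*)$ is nothing other than the residual of the reduced Oseen system \eqref{reduced-system} evaluated at the Step~I output $(\bfu_1^*,\bfq_1^*)$: this is immediate, since the $B_0^T$ block in the first block row of \eqref{full-system} multiplies the zero third component of $\lbrack\bfu_1^*,\bfq_1^*,\boldsymbol{0}\rbrack$, and the only other place $B_0$ enters is the final block row, which produces $\bfr_0^*$.

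Next I would invoke the stopping rule. Step~I runs GMRES on \eqref{reduced-system} from the zero initial guess, so its initial residual is $\lbrack\bff,\boldsymbol{0}\rbrack$, of Euclidean norm exactly $\|\bff\|$; terminating once the residual norm has been reduced by the factor $c\eta$ then gives $\|\bfr^*\|^2 + \|\bfr_1^*\|^2 \le c^2\eta^2\|\bff\|^2$. Finally, because the three blocks occupy disjoint sets of coordinates (velocity, $C^0$ pressure, discontinuous pressure), the Euclidean norm splits additively, $\|\bfz^*\|^2 = \|\bfr^*\|^2 + \|\bfr_1^*\|^2 + \|\bfr_0^*\|^2$; substituting the previous inequality together with $\|\bfr_0^*\| = \|B_0\bfu_1^*\|$ yields \eqref{resbound}.

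The hard part --- insofar as there is one --- is merely to keep the normalisations straight: one must tie the \emph{relative} residual reduction in the GMRES stopping test to the \emph{absolute} quantity $c\eta\|\bff\|$ through the choice of zero initial guess, and check that the reduced-system residual is measured in the same $\ell_2$ norm as the full-system residual \eqref{residual-norm}. Beyond that there is nothing to prove; it is perhaps worth remarking that the bound is informative only when $\|B_0\bfu_1^*\|$ is comparably small, which is precisely why the second, $\MM_1$-preconditioned clean-up stage (Step~II) is needed to drive $\|B_0\bfu\|$ down to the level $\eta\|\bff\|$.
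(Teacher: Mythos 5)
Your proposal is correct and follows essentially the same route as the paper's own proof: expand the block residual at $\lbrack\bfu_1^*,\bfq_1^*,\boldsymbol{0}\rbrack$, identify the first two blocks with the reduced-system residual controlled by the Step~I stopping test, and use the additive splitting of the Euclidean norm together with $\|\bfr_0^*\|=\|B_0\bfu_1^*\|$. Your explicit componentwise expansion and the remark tying the relative stopping criterion to the zero initial guess simply make the paper's argument more detailed, not different.
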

The bound  \eqref{resbound}  has two terms on the right-hand side. While the first term 
can be controlled by reducing $\eta$ and/or $c$,  the second term  measures the  
local incompressibility of the intermediate  Taylor--Hood solution
$ \| B_0 \bfu_1^* \|^2 =  \sum_j \big( \int_{T_j} \! \nabla \cdot {\vec{u}_h}^{\,*} \big)^2$,
where ${\vec{u}_h}^{\,*}$ is the expansion of the coefficient vector $\bfu_1^* $ in the basis of the 
velocity approximation space. Setting  $\eta = 10^{-4}$ and $c=10$ we see that the second term saturates 
the residual error and the residual error jumps up  when the switch is made from the first to the  
second step of the algorithm.  This ``transition'' jump in the residual norm is
clearly evident in the convergence plots. The convergence in the second step
is rapid initially but eventually mirrors the rate observed for  $\MM_1$ approximation
with a standard starting guess.

\subsection{Least square commutator approximation for Oseen flow} \label{sec:lsc}
A second way of approximating the key matrix
$B\boldsymbol{F}^{-1}B^T$ in the case that $B^T$ is generated by a two-field
pressure approximation is  given by the  {least-squares commutator} (LSC) preconditioner
 \begin{align} \label{lsc-definition}
B \boldsymbol{F}^{-1} B^T \approx   M_S =
(B  \boldsymbol{H}^{-1} B^T )\, 
(B \boldsymbol{M}^{-1}  \boldsymbol{F} \boldsymbol{H}^{-1} B^T )^{-1}  
\, (B  \boldsymbol{M}^{-1} B^T) .
\end{align}

The attractive feature of LSC  is that the construction of $M_S$ is 
completely algebraic. The only technical issue is the need to make adjustments   
 on rows and columns associated with tangential velocity degree of freedom adjacent to
  inflow and fixed wall boundaries. These adjustments are associated  with
 a diagonal   scaling matrix $\boldsymbol{D}$ so that
$\boldsymbol{H}= \boldsymbol{D}^{-1/2} \boldsymbol{M} \boldsymbol{D}^{-1/2}$.
Full details can be found in \cite[pp.\,376--379]{elman14}). \bookref

The LSC approximation \eqref{lsc-definition} is also far from perfect  when 
using triangular elements.\footnote{The strategy is designed for
tensor-product approximation spaces. It gives good results in the case of $\Q_2$--$\Q_1^\ast$.} 
To illustrate this, representative convergence histories  that arise in 
solving the inflow-outflow problem using $\P_2$--$\P_1^*$  approximation
 are presented in Fig.~\ref{fig.testproblem1_lsc}.   
 Convergence plots are presented for two preconditioning strategies, namely
 the refined PCD from the previous section  and 
\begin{align} \label{nse-precon-lsc}
\MM_3 =
\left[
\begin{array}{@{}cc@{}}
\boldsymbol{F} & B^T \\ 0 & - M_S
\end{array}
\right],
\end{align}
with $M_S$ defined in \eqref{lsc-definition}. 

\begin{figure}[!ht]
       \begin{center}
	\includegraphics[width=0.35\linewidth]{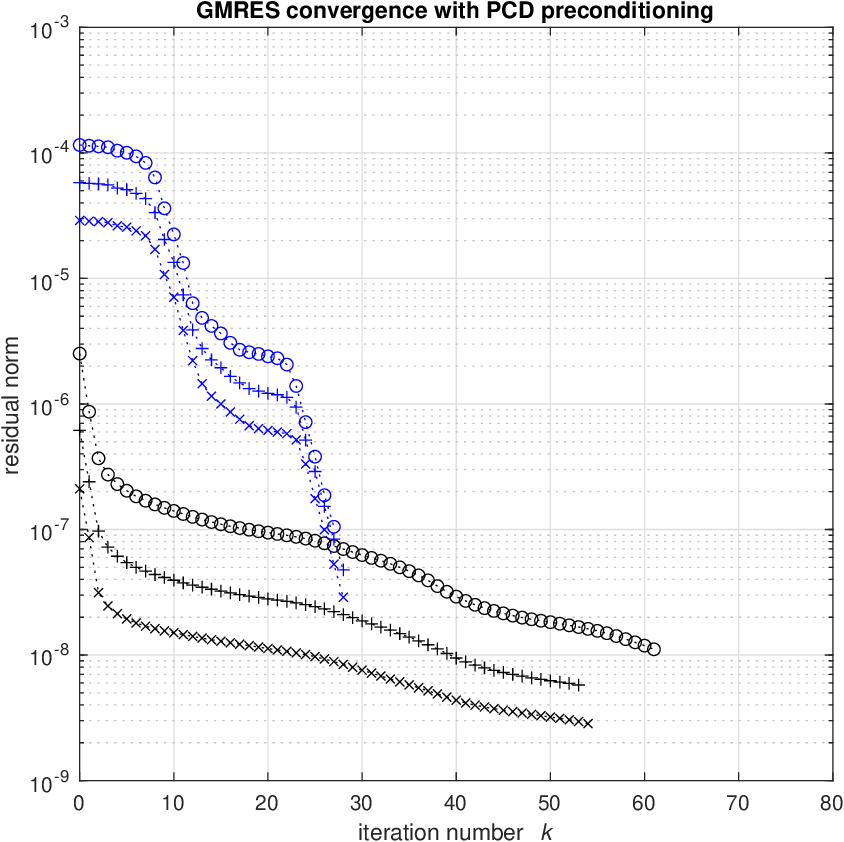}		
	\includegraphics[width=0.35\linewidth]{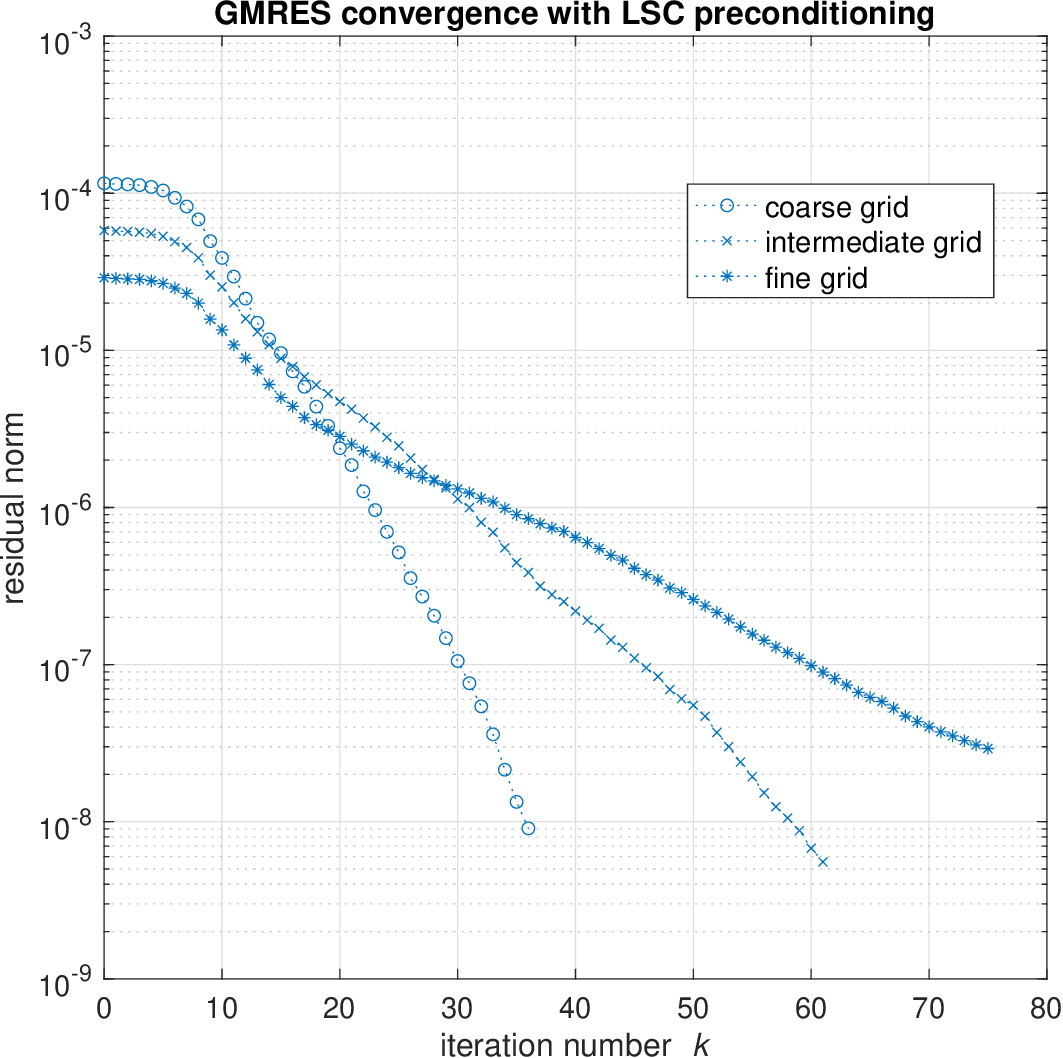}	 
	\end{center}
\caption{Absolute residual reduction  for test problem~3 
when computing  $\P_2$--$\P_1^*$   solutions using 
using  refined PCD  (left) or preconditioner  $\MM_3$ (right)
on  three nested meshes.}
\label{fig.testproblem1_lsc}
\end{figure}

The PCD  histories in Fig.~\ref{fig.testproblem1_lsc}
display  mesh independent convergence and are comparable with those 
observed using square elements  in Fig.~\ref{fig.testproblem1_q2q1}.
The associated cpu times for the solution on the intermediate grid  
with  $2\times 11264$ elements  were 13 seconds for the first step (28 iterations) and
26 seconds for the second step (53 iterations). The LSC preconditioning
strategy is not robust---the convergence rate deteriorates with increasing 
grid refinement. The cpu time for generating  the LSC solution on the
intermediate grid  was over 100 seconds  (61 iterations).

\section{Conclusions} \label{sec:conclusions}
Two-level pressure approximation for incompressible flow problems offer the
prospect of accurate approximation with minimal computational overhead.
Derived quantities of practical importance such as the mean wall shear stress 
are likely to be  computed much more precisely if incompressibility is enforced 
locally.\footnote{See
 \url{https://personalpages.manchester.ac.uk/staff/david.silvester/lecture1.18.mp4}
for a comparison of alternative strategies for computing the average shear stress   
for flow over a step at Reynolds number 200.} 
However, the augmented pressure space causes some challenges for the linear algebra
when the pressure space is defined by combining the usual Taylor--Hood pressure 
basis functions with basis functions for the piecewise continuous pressure space. This is 
 because constant functions can be expressed using either the usual (continuous) 
Taylor--Hood pressure space, or the augmented piecewise constant pressure space.
Specifically, with this common choice for specifying the pressure space, 
the pressure mass matrix becomes singular, and care should be taken to 
carefully construct and apply preconditioners that involve this matrix. 
Care should also be exercised when approximating the discrete inf--sup constant,  
and we find that naive approaches are not always reliable. 
On the other hand, the approximation implemented in EST-MINRES is robust.  
Our computational experimentation indicates that our two-stage  PCD strategy could be
the best way of iteratively solving  two-level pressure discrete linear algebra systems 
in the sense of  algorithmic reliability  and computational efficiency.

\bibliographystyle{siam}
\bibliography{twolevel}

\end{document}